\tikzset{
    ->,
    >=stealth',
    % node distance=3cm,
    every state/.style={thick, fill=gray!10},
    initial text=$ $,
}
\theoremstyle{plain}
\newtheorem{theorem}{Theorem}[section]
\newtheorem{proposition}[theorem]{Proposition}
\newtheorem{lemma}[theorem]{Lemma}
\newtheorem{corollary}[theorem]{Corollary}
\newtheorem*{theorem*}{Theorem}
\newtheorem*{proposition*}{Proposition}
\newtheorem*{lemma*}{Lemma}
\newtheorem*{corollary*}{Corollary}
\newtheorem*{theorem*conjecture*}{Theorem/Conjecture}
\theoremstyle{definition}
\newtheorem{definition}[theorem]{Definition}
\newtheorem{remark}[theorem]{Remark}
\newtheorem{example}[theorem]{Example}
\newtheorem{question}[theorem]{Question}
\newtheorem*{definition*}{Definition}
\newtheorem*{conjecture*}{Conjecture}
\newtheorem*{notation*}{Notation}
\newtheorem*{question*}{Question}
\numberwithin{equation}{section}
\newcommand{\N}{\mathbb{N}}
\newcommand{\Z}{\mathbb{Z}}
\newcommand{\C}{\mathbb{C}}
\newcommand{\A}{\mathcal{A}}
\newcommand{\B}{\mathcal{B}}
\newcommand{\OO}{\mathcal{O}}
\newcommand{\abs}[1]{\left\lvert#1\right\rvert}
\newcommand{\Span}[1]{\left\langle#1\right\rangle}
\newcommand{\npoly}[1]{\C\left\langle#1\right\rangle}
\newcommand{\cleene}[1]{{#1}^\star}
\newcommand{\mon}[1]{{#1}^\star}
\newcommand{\textvar}[1]{\mathit{#1}}
\newcommand{\operp}{
    \;\raisebox{0.04cm}
    {\scalebox{0.72}{$\bigcirc\textnormal{\hspace{-14pt}}\perp$}}\;
}
\newcommand{\smalloperp}{
    \raisebox{0.0288cm}
    {\scalebox{0.5184}{$\bigcirc\textnormal{\hspace{-14pt}}\perp$}} \mskip 1.5mu
}
\DeclareMathOperator{\LT}{LT}
\DeclareMathOperator{\Spec}{Spec}
\newcommand{\MagicUnitary}{\begin{pmatrix}
    p\otimes p\otimes p &
    p\otimes (1-p)\otimes q &
    p\otimes p\otimes (1-p)  &
    p\otimes (1-p)\otimes (1-q)\\
    +(1-p)\otimes q\otimes(1-p) &
    +(1-p)\otimes (1-q)\otimes (1-q) &
    +(1-p)\otimes q\otimes p &
    +(1-p)\otimes (1-q)\otimes q\\
    &&&\\
    &&&\\
    (1-p)\otimes p\otimes p &
    (1-p)\otimes (1-p)\otimes q &
    (1-p)\otimes p\otimes (1-p) &
    (1-p)\otimes (1-p)\otimes (1-q)\\
    +p\otimes q\otimes(1-p) &
    +p\otimes (1-q)\otimes (1-q) &
    +p\otimes q\otimes p &
    +p\otimes (1-q)\otimes q\\
    &&&\\
    &&&\\
    q\otimes (1-p)\otimes p &
    q\otimes p\otimes q &
    q\otimes (1-p)\otimes (1-p) &
    q\otimes p\otimes (1-q)\\
    +(1-q)\otimes (1-q)\otimes(1-p) &
    +(1-q)\otimes q\otimes (1-q) &
    +(1-q)\otimes (1-q)\otimes p &
    +(1-q)\otimes q\otimes q\\
    &&&\\
    &&&\\
    (1-q)\otimes (1-p)\otimes p &
    (1-q)\otimes p\otimes q &
    (1-q)\otimes (1-p)\otimes (1-p) &
    (1-q)\otimes p\otimes (1-q)\\
    +q\otimes (1-q)\otimes(1-p) &
    +q\otimes q\otimes (1-q) &
    +q\otimes (1-q)\otimes p &
    +q\otimes q\otimes q\\
\end{pmatrix}}
\begin{document}
\title{A Concrete Model for the Quantum Permutation Group on 4 Points}
\author{Nicolas Faroß and Moritz Weber }
\address{Nicolas Faroß and Moritz Weber, Saarland University, Fachbereich Mathematik, Postfach 151150,
66041 Saarbr\"ucken, Germany}
\email{faross@math.uni-sb.de, weber@math.uni-sb.de}
\date{\today}
\keywords{quantum permutation group, magic unitary, compact quantum group}

\begin{abstract}
In 2019, Jung-Weber gave an example of a concrete magic unitary $M$,
which defines a $C^*$-algebraic model of the quantum permutation
group $S_4^+$. We show with the help of a computer that there exist no
polynomials up to degree $50$ separating the entries of $M$ from the
generators of $C(S_4^+)$. This indicates that the magic unitary $M$ might already
define a faithful model of $S_4^+$.
\end{abstract}

\maketitle

\section{Introduction}\label{section:introduction}

\noindent The quantum permutation group $S_n^+$ was first introduced by Wang
in~\cite{wang} and it can be regarded as a generalization
of the classical symmetric group $S_n$ in the sense of Woronowicz's compact matrix quantum
groups (see~\cite{woronowicz}). 
It is defined via the universal $C^*$-algebra 
\[
    C(S_n^+) := C^*(u_{ij},\, 1 \leq i,j \leq n ~|~ \text{$u$ is a magic unitary}),
\]
where a matrix $u = (u_{ij})$ is a magic unitary if its entries satisfy the 
relations
\begin{align*}
    u_{ij}^2 = u_{ij}^* = u_{ij},
    &&
    \sum_{k=1}^n u_{ik} = 1,
    &&
    \sum_{k=1}^n u_{kj} = 1
    &&
    (1 \leq i, j \leq n).
\end{align*}
Note that magic unitaries with entries in $\C$ are exactly 
permutation matrices, which justifies the name quantum permutation group.
In~\cite{jung}, Jung and the second author gave an example of a concrete magic unitary,
which defines a model of the quantum permutation group $S_4^+$. It is given by
\[
    M = \scalebox{0.6}{$\MagicUnitary$},
\]
where $p$ and $q$ are universal projections satisfying $p^2 = p^* = p$
and $q^2 = q^* = q$. However, it remained open 
if the model is faithful, i.e.\ if the $*$-homomorphism
\[
    \varphi \colon C(S_4^+) \to B, ~ u_{ij} \mapsto M_{ij}
\]
is injective, where $B$ denotes the $C^*$-algebra generated by the entries $M_{ij}$. 
In particular, Jung-Weber were interested in the 
existence of a non-commutative polynomial $P$ such that
$P$ vanishes in the entries of $M$ but $P$ does not vanish in the generators of $C(S_4^+)$.
Such a polynomial would prove that the mapping $\varphi$ is not injective.
Our main result now partially answers this question to the negative by showing
with the help of a computer (using GAP:GBNP \cite{GBNP} and SageMath \cite{sagemath}) that there exists no such polynomial up to degree $50$.

\begin{theorem*}[\Cref{theorem:main}]
Let $P \in \npoly{X}$ be a non-commutative polynomial in the entries of a $4 \times 4$ matrix 
$X = (x_{ij})$ with $\deg P \leq 50$. If
$P(M) = 0$ then $P$ is contained in the ideal generated by the
magic unitary relations
\begin{align*}
    x_{ij}^2 &= x_{ij}, && (1 \leq i, j \leq 4) \\
    \sum_{k=1}^n x_{ik} &= \sum_{k=1}^n x_{kj}  = 1,  && (1 \leq i, j \leq 4) \\
    x_{ij} x_{ik} &= x_{ji} x_{ki} = 0.  && (1 \leq i, j, k \leq 4, j \neq k)
\end{align*}
\end{theorem*}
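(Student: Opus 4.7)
The plan is to reformulate the theorem as a finite linear-algebra problem that can be checked by computer. Write $I \subset \npoly{X}$ for the ideal generated by the magic unitary relations; then $\npoly{X}/I$ is the (algebraic) coordinate algebra of $S_4^+$, and the theorem is equivalent to the statement that the induced homomorphism
\[
    \bar\varphi \colon \npoly{X}/I \longrightarrow B, \quad x_{ij} \longmapsto M_{ij},
\]
is injective on its degree $\leq 50$ part, where $B$ is the $*$-algebra generated by the entries of $M$ inside the tensor product of three copies of $\npoly{p,q}/(p^2-p, q^2-q)$. Once normal-form representations are available on both sides, injectivity becomes a rank condition on an explicit matrix.

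The first step is to use GAP:GBNP to compute a truncated non-commutative Gröbner basis of $I$ up to degree $50$, with respect to a degree-compatible order on the $16$ variables $x_{ij}$. The normal words (monomials not divisible by any leading term) then form a $\C$-basis $\mathcal{N}_{\leq 50}$ of $(\npoly{X}/I)_{\leq 50}$, and the Gröbner basis simultaneously yields a reduction algorithm for testing ideal membership. For the target $B$, the relations $p^2 = p$ and $q^2 = q$ in each tensor factor already form a Gröbner basis, so the normal forms in each factor are the alternating words in $p, q$, and a basis of the triple tensor product is indexed by triples of such words.

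In the second step, carried out in SageMath, I would substitute the explicit expressions of the entries $M_{ij}$ into every $w \in \mathcal{N}_{\leq 50}$, reduce the outcome to a $\C$-linear combination of the tensor-product normal forms described above, and assemble the resulting coefficient vectors as columns of a large matrix. The theorem then amounts to verifying that this matrix has full column rank, and this can be tested degree by degree: since each $M_{ij}$ has degree $3$ in the generators of each tensor slot, the image of a degree-$d$ element of $\npoly{X}/I$ is supported on triples of alternating words of length $\leq 3d$, so the rank check in degree $d$ reduces to a finite, well-bounded computation.

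The main obstacle is the sheer size of the computation. The Hilbert series of $\npoly{X}/I$ grows quickly, so $\mathcal{N}_{\leq 50}$ is very large, the truncated Gröbner basis of $I$ itself may be large at high degrees, and the target normal forms involve up to $150$ letters in each of the three tensor slots. Making the verification feasible therefore requires incremental processing (raising the degree only as far as needed), sparse linear algebra for the rank tests, and exploiting the obvious $p \leftrightarrow 1-p$ and $q \leftrightarrow 1-q$ symmetries visible in the matrix $M$ to cut down the amount of data that must be stored and manipulated.
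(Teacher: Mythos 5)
Your proposal follows the same strategy the paper actually uses: factor the substitution homomorphism $\varphi_M$ through the quotient $\npoly{X}/I_4$, pick normal-form bases on both the quotient side (via a non-commutative Gröbner basis of $I_4$ computed in GAP:GBNP) and the target side (alternating $p,q$-words in each tensor slot), assemble the coefficient matrix in SageMath, and reduce the theorem to a full-column-rank statement checked by sparse linear algebra. That is the paper's proof in outline.

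However, one of your side remarks inverts the paper's crucial enabling observation. You write that ``the Hilbert series of $\npoly{X}/I$ grows quickly, so $\mathcal{N}_{\leq 50}$ is very large,'' treating this as the main obstacle. For $n=4$ the opposite is true and it is precisely why the verification is feasible at all: the Gröbner basis of $I_4$ is finite, the set of normal words is a regular language (encoded by a $17$-state automaton in the paper), and the number of normal words of length $m$ is exactly $(2m+1)^2$, so $\dim V_m = \binom{2m+3}{3} = \Theta(m^3)$. For $m=50$ this gives $176851$ columns, a number small enough to handle; an exponentially growing Hilbert series would make degree $50$ hopeless. The paper goes on to show that this polynomial growth fails for $n\geq 5$ (the count becomes $\Theta(c^m)$ with $c>1$), and uses that exponential growth against a polynomial bound $(2m+1)^{k\ell}$ on the image dimension to prove that separating polynomials \emph{do} exist for analogous models with $n>4$. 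So the growth rate is not a mere implementation worry but the conceptual hinge of the whole argument, and stating it as exponential reverses that hinge. Two smaller points: the entries $M_{ij}$ lie in $\Span{\A_1^{\otimes 3}}$ (degree $1$ in each tensor slot, not $3$), so the image of degree-$d$ elements sits in alternating words of length $\leq d$ per slot, tightening your bound; and the paper does not need a truncated Gröbner basis since the full Gröbner basis of $I_4$ is finite, nor does it exploit the $p\mapsto 1-p$, $q\mapsto 1-q$ symmetry — instead it uses a cheap elimination that only pivots on rows with a single nonzero entry, which yields a rank lower bound that happens to saturate the column count.
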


\noindent Since the generators of $C(S_4^+)$ also satisfy the relations of the previous theorem, every
polynomial up to degree $50$ vanishing in the entries of $M$ has to vanish in the generators of $C(S_4^+)$.
Hence, we immediately obtain the following corollary.

\begin{corollary*}[\Cref{corollary:main}]
Denote with $u$ the generators of $C(S_4^+)$ and let $P \in \npoly{X}$ be a polynomial with $\deg P \leq 50$.
Then $P(M) = 0$ if and only if $P(u) = 0$.
\end{corollary*}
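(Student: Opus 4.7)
The plan is to derive the corollary as an immediate consequence of \Cref{theorem:main}, splitting the iff into its two implications and handling each separately; one direction uses only that $M$ is a magic unitary, while the other uses the full strength of the main theorem.

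For the implication $P(u) = 0 \Rightarrow P(M) = 0$, I would invoke the universal property of $C(S_4^+)$. Since $M$ is a magic unitary (this is precisely the content of the construction of Jung-Weber recalled in the introduction), the universal property supplies the $*$-homomorphism $\varphi \colon C(S_4^+) \to B$ sending $u_{ij} \mapsto M_{ij}$. Applying $\varphi$ to the identity $P(u) = 0$ then yields $P(M) = \varphi(P(u)) = 0$. Note that this direction works for polynomials of arbitrary degree and does not require the hypothesis $\deg P \leq 50$.

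For the converse $P(M) = 0 \Rightarrow P(u) = 0$, I would apply \Cref{theorem:main}: since $\deg P \leq 50$, the polynomial $P$ lies in the two-sided ideal generated by the magic unitary relations listed there. It then suffices to verify that the generators $u_{ij}$ of $C(S_4^+)$ satisfy each of these relations. The projection conditions $x_{ij}^2 = x_{ij}$ and the row and column sum conditions $\sum_k x_{ik} = \sum_k x_{kj} = 1$ are part of the defining presentation of $C(S_4^+)$, while the orthogonality conditions $x_{ij} x_{ik} = x_{ji} x_{ki} = 0$ for $j \neq k$ follow from the standard $C^*$-algebraic fact that a finite family of projections summing to $1$ is automatically pairwise orthogonal. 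Consequently every generator of the ideal vanishes when evaluated at $u$, and hence so does $P$.

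Since both implications reduce to a direct invocation of \Cref{theorem:main} together with the defining relations of $C(S_4^+)$, I do not expect any genuine obstacle at this stage; the entire computational and algebraic content of the result is absorbed into the proof of \Cref{theorem:main}, and the corollary is essentially a bookkeeping step translating from the ideal-theoretic conclusion of the theorem back into a statement about evaluation at the generators of $C(S_4^+)$.
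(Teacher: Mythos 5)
Your proposal is correct and follows essentially the same route as the paper: the forward direction via the universal property of $C(S_4^+)$, and the converse via \Cref{theorem:main} combined with the fact that the generators $u_{ij}$ satisfy all relations generating $I_4$ (the orthogonality relations being exactly \Cref{proposition:magic-unitary-extra-relations} applied to $u$). No gaps.
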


\noindent Note that the degree $50$ in the previous results is an arbitrary bound, 
which can be increased by providing more time and space to our algorithm.
Further, the maximal degree $50$ is quite large, which indicates that there 
exists no polynomial at all vanishing in the entries of $M$ but not vanishing in 
the generators of $u$. In this case, the concrete magic unitary $M$ might define 
a faithful $C^*$-algebra model of the quantum permutation group $S_4^+$.

We will start in \Cref{section:models-of-qpg} with the definition and some facts about the quantum permutation group, before 
we recall the construction of the concrete magic unitary $M$ from above. 
Then we formulate our main theorem in \Cref{section:computing-separating-polynomials}
and give an overview of the algorithm for proving it. More details and the computation results 
are then provided in \Cref{section:quotient-basis} and \Cref{section:matrix-elimination}.
In \Cref{section:concluding-remarks}, we present
further arguments for why $M$ could be a faithful model of $S_4^+$
and discuss our result in the context of quantum groups and other models 
of $S_n^+$ \cite{banica07}, \cite{banica09b}, \cite{banica15}, \cite{banica17}.
Additionally, we consider the case of larger $n$ and show that our result no longer
holds for similar models of $S_n^+$ with $n > 4$. 

\section*{Acknowledgements}
\noindent The second author has been supported by the SFB-TRR 195 (this work is a contribution to the SFB-TRR 195), the Heisenberg program of the DFG and OPUS-LAP \emph{Quantum groups, graphs and symmetries via representation theory}. This work has been part of the 
first author's Bachelor's thesis.

\section{Models of the Quantum Permutation Group}\label{section:models-of-qpg}

\noindent We begin with the definition of magic unitaries and the quantum permutation
group $S_n^+$, before we come to models of $S_n^+$ and the construction of the concrete magic unitary $M$.
These definitions are formulated in the language of (universal) $C^*$-algebras, which are 
complex associative algebras $A$ with an involution $* \colon A \to A$ and a compatible norm satisfying
the $C^*$-identity
$
    \lVert a \rVert^2 = \lVert a^*a \rVert.
$
However, we will not use this extra structure for the most part and consider a $C^*$-algebra just as 
an (not necessarily commutative) associative complex algebra.
For the general theory of $C^*$-algebras, we refer to~\cite{blackadar06} and for 
an introduction to universal $C^*$-algebra see~\cite{li}.

\begin{definition}[Magic unitary]\label{definition:magic-unitary}
Let $A$ be a unital $C^*$-algebra and $M \in M_n(A)$.
The matrix $M$ is called a \textit{magic unitary} if
its entries are projections and each row and each column sums up to $1$, i.e.
\begin{align*}
    M_{ij}^2 = M_{ij}^* = M_{ij},
    &&
    \sum_{k=1}^n M_{ik} = 1,
    &&
    \sum_{k=1}^n M_{kj} = 1
    &&
    (1 \leq i, j \leq n).
\end{align*}
\end{definition}

\begin{definition}[Quantum permutation group]\label{definition:quantum-permutation-group}
Let $u = (u_{ij})$ be a $n \times n$-matrix of generators and define
the universal unital $C^*$-algebra
\begin{align*}
    A := C^*(u_{ij},\, 1 \leq i,j \leq n ~|~ \text{$u$ is a magic unitary}).
\end{align*}
Then $S_n^+ := (A, u)$ is called the \textit{quantum permutation group}
of size $n$ \cite{wang}. Further, we denote the $C^*$-algebra $A$ with $C(S_n^+)$.
\end{definition}

\noindent We refer to~\cite{weber} for more information on magic unitaries and an overview of some related open problems. Note that there exists a $*$-homomorphism 
\[
    \Delta \colon C(S_n^+) \to C(S_n^+) \otimes C(S_n^+), \ u_{ij} \mapsto \sum_{k=1}^n u_{ik} \otimes u_{kj},
\] 
called comultiplication, which turns $S_n^+$ into a compact matrix quantum group
in the sense of Woronowicz~\cite{woronowicz}. See for example \cite{levandovskyy} for a short introduction 
to quantum symmetries in the context of computer algebra and \cite{timmermann08}, \cite{neshveyev13} for
compact quantum groups in general.
However, we will be mainly interested in the algebraic
properties of $C(S_n^+)$ and we will only come back to the quantum group structure at the end in 
\Cref{section:concluding-remarks}. In addition to the defining relations, magic 
unitaries satisfy further relations, which are implied by the $C^*$-algebraic structure.

\begin{proposition}\label{proposition:magic-unitary-extra-relations}
Let $A$ be a unital $C^*$-algebra and $M  \in M_n(A)$ a magic unitary.
Then the product of two different entries in the same row or column is
zero, i.e.
\begin{align*}
    M_{ij} \cdot M_{ik} = 0, &&
    M_{ji} \cdot M_{ki} = 0  &&
    (1 \leq i, j, k \leq n,\, j \neq k).
\end{align*}
\end{proposition}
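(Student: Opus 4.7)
The plan is to reduce the vanishing of each product $M_{ij}M_{ik}$ (for $j\neq k$) to the familiar fact that in a $C^{*}$-algebra a finite sum of positive elements equal to zero must be termwise zero. The starting observation is that, using $M_{ik}^{2} = M_{ik}$ together with the row relation $\sum_{l=1}^{n} M_{il} = 1$, one can sandwich the identity between two copies of the same projection:
\[
    M_{ik} \;=\; M_{ik}^{2} \;=\; M_{ik}\Bigl(\sum_{l=1}^{n} M_{il}\Bigr) M_{ik} \;=\; \sum_{l=1}^{n} M_{ik}\, M_{il}\, M_{ik}.
\]
Separating off the $l=k$ term, which contributes $M_{ik}^{3} = M_{ik}$, the remaining terms must sum to zero:
\[
    \sum_{l \neq k} M_{ik}\, M_{il}\, M_{ik} \;=\; 0.
\]

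Next I would appeal to the $C^{*}$-algebraic structure. Since every entry is a self-adjoint projection, each summand rewrites as
\[
    M_{ik}\, M_{il}\, M_{ik} \;=\; M_{ik}^{*} M_{il}^{*} M_{il} M_{ik} \;=\; (M_{il} M_{ik})^{*}(M_{il} M_{ik}),
\]
which is a positive element of $A$. A finite sum of positive elements is zero only if each summand is zero, so $(M_{il} M_{ik})^{*}(M_{il} M_{ik}) = 0$ for every $l \neq k$. The $C^{*}$-identity $\lVert a^{*}a\rVert = \lVert a\rVert^{2}$ then forces $M_{il} M_{ik} = 0$, which is the desired row statement.

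The column identity follows by applying exactly the same argument with the column-sum relation $\sum_{l=1}^{n} M_{li} = 1$ in place of the row relation, sandwiching between two copies of $M_{ki}$. The only step that genuinely uses more than bare algebra is the passage from a vanishing sum to the vanishing of each term, and this is where positivity enters; purely algebraic manipulation yields at best the weaker identity $\sum_{l \neq j} M_{ij} M_{il} = 0$. This is exactly why the relations in \Cref{proposition:magic-unitary-extra-relations} are stated separately rather than being absorbed into \Cref{definition:magic-unitary}, and it is also why they must be listed explicitly among the ideal generators in the main theorem, since the algorithmic setting in later sections works at the level of non-commutative polynomials where the $C^{*}$-structure is not directly available.
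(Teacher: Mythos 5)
Your proof is correct and follows essentially the same route as the paper: sandwich the row-sum relation between two copies of the projection, peel off the diagonal term, recognize the remaining summands as positive elements $(M_{il}M_{ik})^{*}(M_{il}M_{ik})$ whose vanishing sum forces each to vanish, and conclude via the $C^{*}$-identity. No substantive differences to report.
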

\begin{proof}
By multiplying the relation
$
    \sum_{k=1}^n M_{ik} = 1
$
with $M_{ij}$ from both sides, we infer that 
$
    \sum_{k \neq j} M_{ij} M_{ik} M_{ij} = 0
$
is a sum of positive elements. By the theory of $C^*$-algebras, each of these summands
must be zero and hence
\[
    \lVert M_{ik} M_{ij} \rVert^2 
    = \lVert {(M_{ik} M_{ij})}^* (M_{ik} M_{ij}) \rVert
    = 0
\]
by the $C^*$-identity.
\end{proof}

\noindent Now, consider an arbitrary magic unitary $M \in M_n(A)$ and the $C^*$-subalgebra
$B \subseteq A$ generated by the entries of $M$. Then
the universal property of $C(S_n^+)$ implies the existence of a surjective
$*$-homomorphism $\varphi \colon C(S_n^+) \to B$ mapping the generators
$u_{ij}$ to $M_{ij}$. Hence, every pair $(B, M)$ of a magic unitary $M$ and 
its corresponding $C^*$-algebra $B$ defines a $C^*$-algebraic 
model of the quantum permutation group $S_n^+$. However, $(B, M)$ is not 
necessarily a compact matrix quantum group
since there might not exist a comultiplication $\Delta \colon B \to B \otimes B$.

In~\cite{jung}, Jung-Weber constructed sequences $(B_i, M_i)$
of such $C^*$-algebraic models. These are obtained by starting with an initial magic unitary 
and iterating the $\operp$-operator from~\cite{woronowicz}. For two 
matrices $M_1 \in M_n(A_1)$ and $M_2 \in M_n(A_2)$ this operator yields a new matrix
$M_1 \operp M_2 \in M_n(A_1 \otimes A_2)$ with entries given by 
\begin{align*}
    {(M_1 \operp M_2)}_{ij} = \sum_{k=1}^n {(M_1)}_{ik} \otimes {(M_2)}_{kj} && (1 \leq i, j \leq n).
\end{align*}
One can directly check that the matrix $M_1 \operp M_2$ is a magic unitary 
if both $M_1$ and $M_2$ are magic unitaries. Hence, starting with an initial $n \times n$ magic unitary
$R$, one obtains a sequence
\[
   (B_1, R), \ (B_2, R^{\smalloperp 2}), \ (B_3, R^{\smalloperp 3}), \ \dots
\]
of models of $S_n^+$, where each $B_i$ is the $C^*$-algebra generated by the entries of $R^{\smalloperp i}$.
Further, Jung-Weber constructed suitable initial matrices $R$ from which one can reconstruct $S_n^+$ as an inverse limit.
One such initial magic unitary is given by
\[
    R :=
    \left(\begin{matrix}
        p & 0 & 1-p & 0 \\
        1-p & 0 & p & 0 \\
        0 & q & 0 & 1-q \\
        0 & 1-q & 0 & q
    \end{matrix}\right)
    \in M_4(A),
\]
where 
\[
    A := C^*\left(p, q \ \middle| \ p^2 = p^* = p,\, q^2 = q^* = q \right) = C^*( \Z_2 * \Z_2)
\]
is the universal unital $C^*$-algebra generated by two projections.
When iterating the $\operp$-product of $R$ with itself, one obtains 
\[
    R^{\smalloperp 2}
    = \scalebox{0.8}{$
\begin{pmatrix}
\parbox{14.3cm}{
\begin{tabular}{r @{\hspace{2pt}}c@{\hspace{2pt}} l @{\hspace{20pt}} r @{\hspace{2pt}}c@{\hspace{2pt}} l @{\hspace{20pt}} r @{\hspace{2pt}}c@{\hspace{2pt}} l @{\hspace{20pt}} r @{\hspace{2pt}}c@{\hspace{2pt}} l }
$p$&$\otimes$& $p$&$(1-p)$&$\otimes$& $q$&$p$&$\otimes$& $(1-p)$&$(1-p)$&$\otimes$& $(1-q)$\\[4pt]
$(1-p)$&$\otimes$& $p$&$p$&$\otimes$&$q$&$(1-p)$&$\otimes$ &$(1-p)$&$p$&$\otimes$&$(1-q)$\\[4pt]
$q$&$\otimes$& $(1-p)$&$(1-q)$&$\otimes$& $(1-q)$&$q$&$\otimes$& $p$&$(1-q)$&$\otimes$&$q$\\[4pt]
$(1-q)$&$\otimes$& $(1-p)$&$q$&$\otimes$& $(1-q)$&$(1-q)$&$\otimes$& $p$&$q$&$\otimes$& $q$
\end{tabular}
}
\end{pmatrix}
$}
\]
and for $R^{\smalloperp 3}$ the magic unitary from the introduction.

\begin{definition}\label{definition:matrix-M}
In this article, let
\[
    M := \scalebox{0.6}{$\MagicUnitary$}.
\]
denote our \textit{concrete magic unitary}. It is given as $M = R^{\smalloperp 3}$ with 
entries in $A \otimes A \otimes A$, where $A$ is the universal $C^*$-algebra genereted by two projections as defined above.
\end{definition}

\noindent Note that we use the same letter for all three generators $p$ in the first, second and third tensor leg of $A\otimes A\otimes A$, for notational simplicity, rather than writing $p_1\otimes p_2\otimes p_3$; and likewise for the letter $q$.
In the previous setting, Jung-Weber asked for the existence of polynomials $P_n$, which separate 
these models.

\begin{question}[\cite{jung}]\label{question:open-question-specific}
Are there polynomials ${(P_n)}_{n \in \N}$ such that $P_n(R^{\smalloperp n}) = 0$
and $P_n(R^{\smalloperp n+1}) \neq 0$?
\end{question}

\noindent Such polynomials exist for $n < 3$ and are for example given by 
$P_1 = x_{12}$ and $P_2 = x_{12}x_{24}$ as can be verified directly (see also~\cite{jung}).
However, the case $n \geq 3$ remained open.
Our main result now shows that any polynomial $P$ with $\deg P \leq 50$, which vanishes
in $R^{\smalloperp 3}$, lies in the ideal generated by the relations of a
magic unitary. Hence, it also vanishes in the entries of $R^{\smalloperp 4}$ and the 
generators of $C(S_4^+)$. This answers the above question in the negative for polynomials up to 
degree $50$: There exists no polynomial $P_3$ with $\deg P_3 \leq 50$ such that $P_3(R^{\operp 3}) = 0$
and $P_3(R^{\operp 4}) \neq 0$. Further, it indicates that there exists no
polynomial at all separating the entries of $R^{\smalloperp 3}$
and the entries of $R^{\smalloperp 4}$.
In this case, $R^{\smalloperp 3}$ might already define a faithful model of $S_4^+$. 
For more details on these models, we refer again to~\cite{jung}.

\section{Computing Separating Polynomials}\label{section:computing-separating-polynomials}

\noindent In the following, we introduce some notations in order to define separating polynomials 
and formulate our main theorem. Let $X$ be a finite set. Then denote with $\npoly{X}$ 
the free associative unital algebra on $X$. Its elements can be regarded as non-commutative
polynomials in the variables $X$. Further, we will require every ideal $I \subseteq \npoly{X}$ 
to be two-sided. Since we are interested in magic unitaries, define the variables
\[
    X_n := \{x_{11}, x_{12}, \hdots, x_{nn} \},
\]
as entries of a general $n \times n$ matrix.
Note that it is sufficient to consider only the variables $x_{ij}$ and omit the $x_{ij}^*$
because the entries of a magic unitary are self-adjoint.
Given a matrix $M \in M_n(A)$ over some algebra $A$, we denote with $\varphi_M$ the 
substitution homomorphism
\[
    \varphi_M \colon \npoly{X_n} \to A, ~x_{ij} \mapsto M_{ij}.
\]
Further, we define the ideal generated by the magic unitary relations
from \Cref{definition:magic-unitary} and \Cref{proposition:magic-unitary-extra-relations}.

\begin{definition}[Magic unitary ideal]
Let $n \in \N$. Then define the \textit{magic unitary ideal} $I_n \subseteq \npoly{X_n}$,
which is generated by the polynomials
\begin{align*}
    x_{ij}^2 - x_{ij}, && \sum_{k=1}^n x_{ik} - 1, && \sum_{k=1}^n x_{kj} - 1
    && (1 \leq i, j \leq n), \\
    x_{ij} \cdot x_{ik}, &&
    x_{ji} \cdot x_{ki}  && &&
    (1 \leq i, j, k \leq n,\, j \neq k).
\end{align*}
\end{definition}

\noindent Using the previous definitions, we can now introduce
separating polynomials and formulate our main result.

\begin{definition}[Separating polynomial]
Let $A$ be a unital $C^*$-algebra and $M \in M_n(A)$ be a magic unitary. A non-zero polynomial $P \in \npoly{X_n}$
is called \textit{separating} if $\varphi_M(P) = 0$ but $P \notin I_n$.
\end{definition}

\begin{theorem}\label{theorem:main}
There exists no separating polynomial $P \in \npoly{X_4}$ with $\deg P \leq 50$ for the concrete magic unitary $M$
from \Cref{definition:matrix-M}.
\end{theorem}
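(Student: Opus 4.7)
The plan is to reduce the statement to a finite linear-algebra check that can then be carried out on a computer. Note first that $I_4 \subseteq \ker \varphi_M$ is immediate from \Cref{definition:magic-unitary} and \Cref{proposition:magic-unitary-extra-relations}, so the theorem is equivalent to the reverse containment in degrees $\le 50$: writing $V := \npoly{X_4}_{\le 50}$ for the space of polynomials of degree at most $50$, we have to show that the induced linear map
\[
   \overline{\varphi_M} \colon V\big/(I_4 \cap V) \;\longrightarrow\; A \otimes A \otimes A
\]
is injective, where $A = C^*(\Z_2 * \Z_2)$ is the universal $C^*$-algebra generated by two projections.

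First I would compute a concrete linear basis of $V/(I_4 \cap V)$. The generators of $I_4$ are the idempotency relations $x_{ij}^2 = x_{ij}$, the row/column sum relations (which allow the elimination of one variable per row and per column), and the orthogonality relations $x_{ij} x_{ik} = x_{ji} x_{ki} = 0$ for $j \neq k$. Feeding these into a noncommutative Gr\"obner basis engine such as GAP:GBNP yields a confluent rewriting system and a set $B_{\le 50}$ of normal-form monomials that forms a linear basis of the quotient in each degree $\le 50$.

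The second step is to evaluate $\varphi_M$ on $B_{\le 50}$. For every $w \in B_{\le 50}$ the element $w(M) \in A \otimes A \otimes A$ expands into a linear combination of elementary tensors in the four projections $p,\, 1-p,\, q,\, 1-q$ in each of the three tensor factors. Collecting these coefficients into a large sparse matrix $\Phi$ indexed by rows $w \in B_{\le 50}$ and columns given by such elementary tensors turns the theorem into the single statement that $\Phi$ has full row rank. Provided a suitable normal form for $A \otimes A \otimes A$ is chosen so that distinct column indices correspond to linearly independent elements (for instance, by restricting to reduced words in each free-product factor $\Z_2 * \Z_2$), this is a purely linear-algebraic question that can in principle be verified in SageMath.

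The hard part will be scale. The graded dimension of $\npoly{X_4}/I_4$ grows rapidly with degree, so at $d = 50$ both $B_{\le 50}$ and $\Phi$ are enormous and a naive storage and rank check are infeasible. I expect the work to go into processing the computation degree by degree, exploiting symmetries (permutations of rows, columns and tensor legs, together with the involutions $p \leftrightarrow 1-p$ and $q \leftrightarrow 1-q$) to split $\Phi$ into invariant blocks of manageable size, and using sparse Gaussian elimination for the rank test. These combinatorial and linear-algebraic reductions are presumably what \Cref{section:quotient-basis} and \Cref{section:matrix-elimination} are devoted to.
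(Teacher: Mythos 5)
Your reduction is the same as the paper's: factor $\varphi_M$ through $\npoly{X_4}/I_4$, pick a monomial normal-form basis of the degree-$\leq 50$ quotient via a noncommutative Gr\"obner basis for $I_4$ (the paper uses GAP:GBNP), express the images in a linearly independent spanning set of $A^{\otimes 3}$ built from alternating $p,q$-words, and verify by sparse elimination that the resulting transformation matrix has full rank in the domain direction.

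One point deserves correction, since it concerns the very reason the computation is feasible. You write that the graded dimension of $\npoly{X_4}/I_4$ "grows rapidly" and that therefore a naive rank check is infeasible, necessitating symmetry-based block decompositions. That expectation is off: a central observation of the paper, obtained by encoding the set of normal monomials as the language of a $17$-state finite automaton and counting accepting paths, is that the degree-$\leq m$ part of the quotient has dimension exactly $\binom{2m+3}{3} = \Theta(m^3)$ --- only cubic growth. For $m = 50$ this is $176851$ columns against at most $(2\cdot 50 + 1)^3 = 1030301$ rows, and the matrix is sparse, so a targeted Gaussian elimination (the paper's Algorithm~2, which peels off rows with a single nonzero entry) suffices with no symmetry reductions at all; the total work is $\OO(m^6)$. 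The polynomial growth is specific to $n = 4$: the paper verifies that for $n \in \{5,6\}$ the analogous quotient has exponential growth, which is precisely why separating polynomials must exist for those $n$ (as shown in the concluding section). Aside from this, your outline matches the paper's proof.
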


\noindent In the following, we will outline our approach for proving \Cref{theorem:main}
with the help of a computer\footnote[1]{An implementation of our algorithms can be found at \href{https://github.com/nfaross/model-s4plus}{https://github.com/nfaross/model-s4plus}}. More details and the computation results are then presented
in \Cref{section:quotient-basis} and \Cref{section:matrix-elimination} before we come to the final proof 
in \Cref{section:main-proof}. 
A discussion of our result in the setting of models of $S_4^+$ can then be found in \Cref{section:concluding-remarks},
where we also show that a generalization of \Cref{theorem:main} does not hold
in the case $n > 4$.

To compute separating polynomials for the concrete magic unitary $M$ from \Cref{definition:matrix-M},
we consider the substitution homomorphism $\varphi_M$.
Since $M$ is a magic unitary, we have $I_4 \subseteq \ker \varphi_M$, such that
$\varphi_M$ can be factored as
\begin{center}
\begin{tikzcd}
    \npoly{X_4}  \arrow[rd, "\pi"] \arrow[rr, "\varphi_M"] & &
    A^{\otimes 3}  \\ &
    \npoly{X_4} / I_4 \arrow[ru, "\psi"] &
\end{tikzcd}
\end{center}
Here, $A$ denotes the universal $C^*$-algebra generated by two projections $p$ and $q$.
Further, denote with
\[
    V_m := \{ \pi(P) ~|~ P \in \npoly{X_4}, \deg P \leq m \}
\]
the subspace of all residue classes of polynomials up to degree $m$.
Then the kernel of the restriction $\psi|_{V_m}$ contains exactly the residue
classes of separating polynomials up to degree $m$.
Hence, there exists no separating polynomial up to degree $m$ if and only if
$\ker \psi|_{V_m} = \{ 0 \}$.
To show this statement, we proceed in two steps.
\begin{enumerate}
\item In \Cref{section:quotient-basis}, we construct a basis $\B_m$ for the subspace $V_m$, 
which can be obtained from a Gröbner basis for the magic unitary ideal $I_4$.
It turns out that such a basis $\B_m$ can alternatively be described by a finite automaton, 
which is a special kind of labelled graph. This finite automaton then allows us to efficiently 
enumerate all basis elements and compute the dimension of $V_m$. In particular, we obtain 
that $\abs{\B_m} = \Theta(m^3)$, such that the dimension grows only polynomial in the degree $m$.
Note that this polynomial growth is essential and is required for performing the following computations with a large degree.
\item In \Cref{section:matrix-elimination}, we use the basis $\B_m$ to construct a transformation
matrix $\Psi_m$ of the mapping $\psi|_{V_m}$. Using a special form of Gaussian Elimination
we are then able to compute a lower bound on the rank of $\Psi_m$. 
By running these algorithms for $m = 50$, we obtain
in \Cref{section:main-proof} that $\ker \psi|_{V_{50}} = \{0\}$, which proves \Cref{theorem:main}.
Further, we analyze 
these algorithms and show that the matrix construction and elimination have a time and space 
complexity of $\OO(m^6)$. Hence, it would be possible to increase $m$ by providing more time and space.
\end{enumerate}

\section{Constructing a Quotient Basis}\label{section:quotient-basis}

\noindent We start with some facts about Gröbner bases and finite automaton
in order to show how these can be used to describe a basis $\B$ of a quotient $\npoly{X} / I$. 
These results will then be applied to the spaces $V_m \subseteq \npoly{X_4}/I_4$ from 
the previous section. Note that the results in this section are not new and 
labelled graphs were for example used by Ufnarovski\u{\i} in~\cite{ufnarovski} to 
describe bases of such quotients. However, we will restate them for convenience.

\subsection{Gröbner bases}\label{section:groebner-bases}

Gröbner bases generalize Euclidean division to multivariate polynomial rings and allow for example
to solve the ideal membership problem. See~\cite{mora} for a detailed introduction in the
commutative and non-commutative case.
In the following, denote with $X$ a finite set of generators. Further, let
$\mon{X}$ be the set of all monomials in $\npoly{X}$ including $1$. Before we can 
define Gröbner bases, we first need a well-ordering on $\mon{X}$.

\begin{definition}[Degree lexicographic order]
Let $\leq$ be a well-ordering on $X$. Then one can extend it to a well-ordering on $\mon{X}$
by first comparing the degree of two monomials. If the degree is equal, then
monomials are compared lexicographically from left to right. This ordering
is called \textit{degree lexicographic order}. 
\end{definition}

\noindent In the following, we fix some well-ordering on $X$ and equip 
$\mon{X}$ with the degree lexicographic order from the previous definition. This allows us to define 
the leading term of a polynomial.

\begin{definition}[Leading term]
Let $P \in \npoly{X}$ be a non-zero polynomial, which can uniquely be written as
$P = \sum_{i=1}^n \alpha_i x_i$
for some $a_1, \ldots, \alpha_n \in \C \setminus \{ 0 \}$ and $x_1, \ldots, x_n \in \mon{X}$.
Then the \textit{leading term} $\LT(P)$ is the largest monomial $x_i$ with respect to the degree 
lexicographic order in this representation. In this case, the degree $\deg P$ is given by the length of
$\LT(P)$.
\end{definition}

\noindent With these definitions we can now introduce Gröbner bases.

\begin{definition}[Gröbner basis]
Let $I \subseteq \npoly{X}$ an ideal. A finite set $G \subseteq I$
of non-zero polynomials is called \textit{Gröbner basis} for the ideal $I$ if
the leading terms of $I$ are exactly the monomials divisible by a leading term from $G$, i.e.
\[
    \{ \LT(P) ~|~ P \in I,~P \neq 0 \} = \{ a\LT(g)b ~|~ a, b \in \mon{X}, g \in G \}.
\]
\end{definition}

\noindent Given a Gröbner basis $G \subseteq I$, the next lemma shows that there exists
a simple description of a basis for $\npoly{X}/I$.

\begin{lemma}\label{lemma:quotient-basis}
Let $I \subseteq \npoly{X}$ be an ideal and $G \subseteq I$ a Gröbner basis.
Then a basis of the quotient $\npoly{X}/I$ is given by the residue classes of
\[
    \B = \mon{X} \setminus \{ a\LT(g)b ~|~ a, b \in \mon{X}, g \in G \}.
\]
Moreover, the residue classes of
\[
    \B_m = \{ P \in \B ~|~ \deg P \leq m \}
\]
form a basis for the spaces $V_m \subseteq \npoly{X}/I$ of residue classes of 
polynomials up to degree $m$.
\end{lemma}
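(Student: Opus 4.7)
The plan is to establish the two defining properties of a basis for the residue classes of $\B$ in $\npoly{X}/I$ — linear independence and spanning — and then to refine the spanning argument so that it respects the degree filtration, which gives the statement about $\B_m$.

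For linear independence I would argue by contradiction. Suppose $\sum_i \alpha_i b_i \in I$ for pairwise distinct $b_i \in \B$ with some $\alpha_i \neq 0$. Since the $b_i$ are monomials, the sum is nonzero in $\npoly{X}$ and its leading term equals the largest $b_{i_0}$ with $\alpha_{i_0} \neq 0$ in the degree lexicographic order. Then $b_{i_0}$ is the leading term of a nonzero element of $I$, so by the Gröbner basis property there exist $a, b \in \mon{X}$ and $g \in G$ with $b_{i_0} = a\LT(g)b$. This contradicts $b_{i_0} \in \B$.

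For spanning I would use a reduction procedure. Given $P \in \npoly{X}$, if every monomial of $P$ already lies in $\B$ we are done. Otherwise let $x$ be the largest monomial of $P$ (in degree-lex) that is not in $\B$, with coefficient $\alpha \in \C$. Write $x = a\LT(g)b$ with $g \in G$ and $a,b \in \mon{X}$, and let $\beta$ be the leading coefficient of $g$. Replace $P$ by $P' := P - (\alpha/\beta)\, a g b$, which has the same residue class modulo $I$. The $x$-term cancels, and every other monomial of $a g b$ comes from a non-leading term of $g$, hence is strictly smaller than $a\LT(g)b = x$ in degree-lex. Thus either $P'$ is supported in $\B$ or its largest bad monomial is strictly smaller than $x$. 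Since degree-lex is a well-ordering the process terminates, and $P + I$ is expressed as a $\C$-linear combination of residue classes from $\B$.

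The main point requiring care is the degree refinement, which is the reason the lemma is formulated with the degree lexicographic order. Every monomial of $g$ has degree at most $\deg \LT(g)$, so every monomial of $a g b$ has degree at most $\deg x \leq \deg P$. Consequently the reduction never leaves the subspace of polynomials of degree $\leq m$, so starting from $P$ with $\deg P \leq m$ the final representative is supported on $\B_m$. Linear independence is inherited by the subset $\B_m \subseteq \B$, and hence the residue classes of $\B_m$ form a basis of $V_m$. The subtle point I would stress is that this degree preservation would fail for a pure lexicographic order; it is essential that the ordering first compares degrees.
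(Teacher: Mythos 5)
Your proof is correct and follows essentially the same route as the paper: the paper cites \cite[Theorem 1.3]{mora} for the decomposition $\npoly{X} = I \oplus \Span{\B}$, whereas you prove it directly via the standard leading-term/reduction argument, and your observation that reduction never increases the degree is exactly the step the paper summarizes as ``one checks that $\deg P_2 \leq \deg P$.'' The only detail worth making explicit is why the $x$-term genuinely cancels, namely that $atb = a\LT(g)b$ forces $t = \LT(g)$ by cancellation in the free monoid, so no other term of $agb$ can reproduce $x$.
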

\begin{proof}
Note that the elements of $\B$ are linearly independent since they are given by distinct 
monomials. Further, we can write
\[
    \npoly{X} = I \oplus \Span{B},    
\]
where $\Span{B}$ denotes the linear span of $\B$. See~\cite[Theorem 1.3]{mora} 
in combination with the definition of Gröbner basis for a proof of this statement.
Taking the quotient by $I$, we immediately obtain that $\npoly{X}/I \cong \Span{B}$
and that the residue classes of $B$ form a basis of $\npoly{X}/I$. For the second part 
of the lemma, one checks that if $P = P_1 + P_2$ with $P_1 \in I$ and $P_2 \in \Span{B}$,
then $\deg P_2 \leq \deg P$, which implies that the space $V_m$ is spanned 
by the residue classes of $\B_m$.
\end{proof}

\subsection{Finite Automata}
Next, we want to reformulate \Cref{lemma:quotient-basis} in the language of finite automata.
These are a fundamental tool in theoretical computer science
and can be used to describe sets of words over an alphabet. 
In the following, we start by recalling the definition of language and finite automata. A detailed introduction to languages and
automata can for example be found in~\cite{hopcroft}.

\begin{definition}
Let $\Sigma$ be a finite set of symbols called \textit{alphabet}.
A \textit{word} over $\Sigma$ is a finite sequences $w_1 \cdots w_n$ of
symbols $w_1, \hdots, w_n \in \Sigma$.
Denote with $\cleene{\Sigma}$ the Kleene closure of $\Sigma$, which is the set of all words over $\Sigma$
including the empty word $\varepsilon$. Then a set of words $L \subseteq \cleene{\Sigma}$
is called \textit{language}.
\end{definition}

\begin{remark}\label{remark:monomial-words}
Consider the algebra of non-commutative polynomials $\npoly{X}$. Then the set of variables
$X$ can be considered as an alphabet. In the previous notation,
the set $\mon{X}$ of all monomials
coincides with the Kleene closure $\cleene{X}$,
if we identify the empty word $\varepsilon$ with the unit $1$.
\end{remark}

\noindent Next, we introduce finite automata, which can be used to describe
a special class of languages called regular languages.

\begin{definition}[Finite automaton]
A \textit{finite automaton} over an alphabet $\Sigma$ is a directed and labelled
graph $\Gamma = (V, E, \ell, s_0, F)$ where
\begin{enumerate}
\item $V$ denotes the set of vertices and $E$ the set of directed edges.
\item $\ell \colon E \to \Sigma$ assigns to each edge in $E$ a symbol from $\Sigma$.
\item $s_0 \in V$ is the \textit{initial state}.
\item $F \subseteq V$ is a set of \textit{final states}.
\end{enumerate}
Note that multi-edges are allowed and the set $F$ can be empty.
The vertices are also called \textit{states} and the edges \textit{transitions}.
\end{definition}

\begin{definition}
Let $\Gamma$ be a finite automaton over an alphabet $\Sigma$.
We say $\Gamma$ \textit{accepts} a word $w \in \cleene{\Sigma}$, if 
there exists a directed path $e_1, \hdots, e_n \in E$ starting at the initial state $s_0$ 
and ending at a final state $s \in F$, such that 
\[
    w = \ell(e_1) \cdots \ell(e_n).
\]
\end{definition}

\begin{definition}[Regular language]
A language $L \subseteq \cleene{\Sigma}$ is called \textit{regular},
if there exists a finite automaton $\Gamma$ over $\Sigma$ such that 
\[
    L = \{ w \in \cleene{\Sigma} ~|~ \text{$\Gamma$ accepts $w$} \}.
\]
In this case, we write $L = L(\Gamma)$ and call it the 
language accepted by $\Gamma$.
\end{definition}

\begin{example}\label{example:finite-automaton}
\Cref{figure:simple-finite-automaton} shows an example of a finite automaton $\Gamma$ over the alphabet 
$\Sigma = \{x, y, z \}$. Its initial state $s_0 = 1$ is marked with an arrow
and the final states $F = \{ 3 \}$ are circled. One can check that its accepted language is given by 
\[
    L(\Gamma) = \{ w \in \cleene{\Sigma} ~|~ \text{$w$ contains $xz$ as subword} \}.
\]
Note that multiple edges with labels $x$, $y$ and $z$ at the states $1$ and $3$ are drawn as one edge.
\begin{figure}
\begin{center}
\begin{tikzpicture}
\node[state, initial] (v1) at (0, 0) {$1$};
\node[state] (v2) at (3, 0) {$2$};
\node[state, accepting] (v3) at (6, 0) {$3$};
\path[->] (v1) edge node[above] {$x$}(v2);
\path[->] (v2) edge node[above] {$z$}(v3);
\path[->, loop/.style={looseness=10}] (v1) edge[loop above] node {$x, y, z$}(v1);
\path[->, loop/.style={looseness=10}] (v3) edge[loop above] node {$x, y, z$}(v3);
\end{tikzpicture}
\end{center}
\caption{A simple finite automaton with $3$ states.}\label{figure:simple-finite-automaton}
\end{figure}
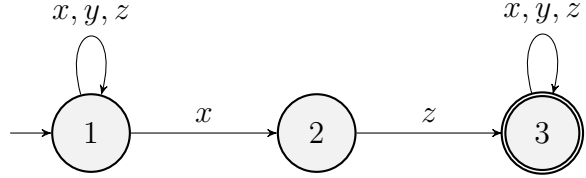
\end{example}

\noindent Using the previous definition of finite automata and regular
language, we can now formulate and prove the following lemma.

\begin{lemma}\label{lemma:subword-automaton}
Let $\Sigma$ be an alphabet and $S \subseteq \cleene{\Sigma}$ a finite set of
words. Then the set
\[
    \{ w \in \cleene{\Sigma} ~|~ \text{$w$ contains no word from $S$ as subword} \}
\]
is a regular language.
\end{lemma}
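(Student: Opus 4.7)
The plan is to prove \Cref{lemma:subword-automaton} by reducing to the single-word case (which generalizes \Cref{example:finite-automaton}) and then using closure properties of regular languages. Concretely, for each $s \in S$, I would construct a finite automaton accepting exactly the words containing $s$ as a subword; taking a finite union and then a complement gives the desired language.

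For a fixed word $s = s_1 \cdots s_k \in \cleene{\Sigma}$, I would generalize the construction from \Cref{example:finite-automaton} by building an automaton with states $\{0, 1, \dots, k\}$, where state $i$ represents ``the longest suffix of the input read so far that is a prefix of $s$ has length $i$''. The transitions are given by the standard Knuth--Morris--Pratt failure function: from state $i < k$ reading symbol $a \in \Sigma$, one moves to the largest $j \leq i+1$ such that $s_1 \cdots s_j$ is a suffix of $s_1 \cdots s_i \cdot a$. State $k$ is the unique final state and is equipped with self-loops on every symbol of $\Sigma$, so that once $s$ has been read the automaton stays accepting. This shows that
\[
    L_s := \{ w \in \cleene{\Sigma} \mid \text{$w$ contains $s$ as a subword} \}
\]
is regular for each $s \in S$.

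Since $S$ is finite, the language $\bigcup_{s \in S} L_s$ of words containing \emph{some} element of $S$ as a subword is a finite union of regular languages, hence regular by the standard closure of the class of regular languages under finite unions (via a product construction or via non-deterministic union followed by determinization). Its complement --- which is precisely the language in the statement of the lemma --- is then regular as well, by closure of regular languages under complementation (swap accepting and non-accepting states in a deterministic automaton). Both closure results are classical and may be cited from~\cite{hopcroft}.

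There is no real conceptual obstacle here: the only subtlety is to make sure the failure-function transitions in the single-word automaton are defined correctly, so that after reading any prefix the state genuinely records the longest suffix matching a prefix of $s$. An alternative, more direct route would be to build a single Aho--Corasick automaton whose states are the proper prefixes of elements of $S$ together with a trap state reached whenever a complete word from $S$ has just been matched; declaring every non-trap state accepting yields the target language in one shot, bypassing the union/complement step. Either presentation suffices, and given that the result is a standard fact from automata theory I would keep the proof short and lean on \cite{hopcroft} for the closure properties.
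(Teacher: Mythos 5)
Your proposal is correct and follows essentially the same route as the paper: build an automaton for each single-word language $L_s$ (generalizing \Cref{example:finite-automaton}), then invoke closure of regular languages under finite unions and complementation, citing~\cite{hopcroft}. The extra detail you supply (the failure-function transitions, and the alternative Aho--Corasick construction) is a refinement the paper leaves implicit, but it does not change the argument.
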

\begin{proof}
Let $s \in S$ and define the sets
\[
    L_s = \{ w \in \cleene{\Sigma} ~|~ \text{$w$ contains $s$ as subword} \}.
\]
These are regular languages since the corresponding finite automaton can be 
constructed similar to the one in \Cref{example:finite-automaton}.
Since regular languages are closed under unions and complements 
(see~\cite[Chapter 2]{hopcroft}), the following language is again regular:
\begin{align*}
    \cleene{\Sigma} \setminus \bigcup_{s \in S} L_s
    = \{ w \in \cleene{\Sigma} ~|~ \text{$w$ contains no word from $S$ as subword} \}.
\end{align*}
\end{proof}

\noindent By combining the previous lemma with \Cref*{remark:monomial-words}, we now obtain the 
following version of \Cref{lemma:quotient-basis}, which allows us to describe
quotient bases by finite automata.

\begin{lemma}\label{lemma:quotient-basis-automaton}
Let $I \subseteq \npoly{X}$ be an ideal and $G$ a Gröbner basis
for $I$. Then there exists a finite automaton $\Gamma$ over the
alphabet $X$ such that the residue classes of the accepted language $L(\Gamma)$
are a basis for $\npoly{X}/I$. \\
In particular, the residue classes of all accepted words up to length $m$
are a basis for $V_m \subseteq \npoly{X}/I$, where
$V_m$ denotes the subspace of residue classes of polynomials up to degree $m$.
\end{lemma}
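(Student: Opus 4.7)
The plan is to assemble this lemma directly from the two tools already developed: the quotient basis description from Lemma \ref{lemma:quotient-basis} and the automaton construction from Lemma \ref{lemma:subword-automaton}. By Remark \ref{remark:monomial-words}, the monomials $\mon{X}$ are in bijection with the words $\cleene{X}$ over the alphabet $X$ (identifying $1$ with $\varepsilon$), so every statement about monomial divisibility translates into a statement about subwords.

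First I would invoke Lemma \ref{lemma:quotient-basis} to obtain the basis
\[
    \B = \mon{X} \setminus \{ a\LT(g)b ~|~ a, b \in \mon{X}, g \in G \}
\]
of $\npoly{X}/I$. The key observation is that, under the word-monomial identification, a monomial has the form $a\LT(g)b$ for some $a,b \in \mon{X}$ precisely when, viewed as a word, it contains $\LT(g)$ as a (contiguous) subword. Hence $\B$ coincides with the set of words over $X$ containing no word from
\[
    S := \{ \LT(g) ~|~ g \in G \}
\]
as a subword. Since $G$ is finite by definition of a Gröbner basis, so is $S$, and Lemma \ref{lemma:subword-automaton} applied to $S$ produces a finite automaton $\Gamma$ over $X$ with $L(\Gamma) = \B$. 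This gives the first part of the lemma.

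For the second part, recall that the paper defines $\deg P$ as the length of $\LT(P)$, so for a monomial $w \in \mon{X}$ (which equals its own leading term) the degree coincides with the word length of $w$. Thus the accepted words of length at most $m$ are exactly the elements of $\B_m = \{P \in \B ~|~ \deg P \leq m\}$, and Lemma \ref{lemma:quotient-basis} already states that the residue classes of $\B_m$ form a basis of $V_m$.

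There is no real obstacle here; the statement is essentially a translation of Lemma \ref{lemma:quotient-basis} into automaton-theoretic language, and the only thing to check carefully is the correspondence between ``divisible by some $\LT(g)$ as a monomial'' and ``contains some $\LT(g)$ as a subword,'' which is immediate once monomials are identified with words as in Remark \ref{remark:monomial-words}.
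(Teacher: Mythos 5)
Your proposal is correct and follows essentially the same route as the paper's proof: identify monomials with words via Remark \ref{remark:monomial-words}, set $S = \{\LT(g) \mid g \in G\}$, rewrite the basis $\B$ of Lemma \ref{lemma:quotient-basis} as the words avoiding $S$ as subwords, and apply Lemma \ref{lemma:subword-automaton}, with the degree--length correspondence handling the statement about $V_m$. The only difference is that you spell out the (immediate) equivalence between monomial divisibility and contiguous subword containment, which the paper leaves implicit.
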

\begin{proof}
As in \Cref{remark:monomial-words}, we identify the set of monomials $\mon{X}$
with words over the alphabet $X$. Define $S = \{ \LT(g) ~|~ g \in G \}$. Then $\B$
from \Cref{lemma:quotient-basis} can be written as
\[
    \B = \{ w \in \cleene{X} ~|~ \text{$w$ contains no word from $S$ as subword} \}.
\]
Using \Cref{lemma:subword-automaton} and the definition of a regular language, 
we obtain a finite automaton $\Gamma$
such that the residue classes of $L(\Gamma) = \B$ are a basis of $\npoly{X}/I$.
Since the degree $\deg w$ corresponds to the length of a word $w \in \mon{X}$,
we further obtain that the sets $\B_m$ from \Cref{lemma:quotient-basis}
are given by all accepted words up to length $m$.
\end{proof}

\subsection{Computational Results}\label{section:computatial-results}
In the following, we present our results for applying \Cref{lemma:quotient-basis-automaton}
to the magic unitary ideal $I_4$.
For computing a Gröbner basis $G$ for the magic unitary ideal $I_4$,
the computer algebra system GAP \cite{GAP4} and the package GBNP~\cite{GBNP} are used.
The corresponding finite automaton is then constructed using SageMath~\cite{sagemath}.
This is done as described in the proof of \Cref{lemma:subword-automaton} 
by computing the union and complement of simpler finite automatons\footnote[1]{An implementation of our algorithms can be found at \href{https://github.com/nfaross/model-s4plus}{https://github.com/nfaross/model-s4plus}}.
Further, it is possible to simplify the resulting finite automaton and minimize its number of states.
A picture of the final finite automaton for $I_4$ can be found \Cref{appendix:automaton}.
In particular, it has $17$ states and every state is final.
Further, SageMath allows us to compute that it contains exactly 
${(2m + 1)}^2$ accepting paths of length $m$, 
which implies that there are
\[
    \sum_{k=0}^m {(2k+1)}^2 = \frac{1}{6}(2m+1)(2m+2)(2m+3) = \binom{2m+3}{3}
\] 
accepting paths up to length $m$. Hence, we obtain that 
$\dim V_m = \Theta(m^3)$ grows only polynomial in $m$.
In addition to the ideal $I_4$, we were able to
compute Gröbner bases and construct finite automata for the magic unitary ideals 
$I_5$ and $I_6$. The resulting finite automata have $26$ and $37$ states respectively and
contain $\Theta(6.854\ldots^m)$ and $\Theta(13.928\ldots^m)$ accepting paths of length $m$.
Thus, the dimension of $V_m$ grows exponentially in these cases.

\section{Matrix Construction and Elimination}\label{section:matrix-elimination}

\noindent The goal of the following section is to construct a transformation
matrix $\Psi_m$ of the mapping $\psi|_{V_m}$ from \Cref{section:computing-separating-polynomials}
and to compute a lower bound on its rank in order 
to prove \Cref{theorem:main}.
Recall that the mapping $\psi$ was obtained by factoring 
the substitution homomorphism $\varphi_M$ of the concrete magic unitary $M$ from 
\Cref{definition:matrix-M} as follows.

\begin{center}
\begin{tikzcd}
    \npoly{X_4}  \arrow[rd, "\pi"] \arrow[rr, "\varphi_M"] & &
    A^{\otimes 3}  \\ &
    \npoly{X_4} / I_4 \arrow[ru, "\psi"] &
\end{tikzcd}
\end{center}

\noindent Here, $A$ is the universal unital $C^*$-algebra
generated by two projections, $I_4$ is the magic unitary ideal and 
$V_m \subseteq \npoly{X_4} / I_4$ is the subspace of residue
classes of polynomials up to degree $m$.

\subsection{Construction Algorithm}

In order to construct the transformation matrix for the mapping 
$\psi|_{V_m}$, we first have to 
choose a basis for its domain and its image. In \Cref{section:quotient-basis}
we constructed a finite automaton, which describes a basis $\B_m$ for 
the domain $V_m$. See \Cref{lemma:quotient-basis-automaton} for details on the construction
and \Cref{appendix:automaton} for the resulting automata.
For a basis of the image, consider the following sets
\begin{align*}
\A &:= \{ 1, p, q, pq, qp, pqp, \hdots \} \subseteq A, \\
\A_m &:= \{ a \in \A ~|~ \text{$a$ has at most $m$ factors} \} \subseteq A, \\
\A^{\otimes k}_m &:=
\{ a_1 \otimes \cdots \otimes a_k ~|~ a_1, \ldots, a_n \in \A_m \} \subseteq A^{\otimes k}.
\end{align*}

\noindent The next lemma shows that the image of $\psi|_{V_m}$ is contained in 
the linear span of $\A^{\otimes 3}_m$, which can then be chosen as a basis.

\begin{lemma}\label{lemma:image-dimension}
In the previous notation, the elements of $\A^{\otimes k}_m$ are linearly independent and $\psi(V_m) \subseteq \Span{\A^{\otimes 3}_m}$.
In particular, $\dim \psi(V_m) \leq {(2m+1)}^3$.
\end{lemma}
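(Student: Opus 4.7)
The plan is to split the statement into two independent claims: that $\A^{\otimes k}_m$ is a linearly independent subset of $A^{\otimes k}$, and that $\psi(V_m) \subseteq \Span{\A^{\otimes 3}_m}$. The dimension bound $\dim \psi(V_m) \leq {(2m+1)}^3$ is then immediate once one counts $\abs{\A_m} = 2m + 1$ (one word of length zero plus two alternating words of each length from $1$ to $m$).

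For the linear independence I would first reduce to the case $k = 1$: for any vector spaces $U_1, \hdots, U_k$ and linearly independent subsets $S_i \subseteq U_i$, the set of elementary tensors from the $S_i$ is linearly independent in the algebraic tensor product $U_1 \otimes \cdots \otimes U_k$. Since $A = C^*(\Z_2 * \Z_2)$ is nuclear ($\Z_2 * \Z_2$ is the amenable infinite dihedral group), the algebraic tensor product embeds faithfully into $A^{\otimes k}$, and independence transfers. For the case $k = 1$, substitute the self-adjoint unitaries $s := 2p - 1$ and $t := 2q - 1$, which satisfy $s^2 = t^2 = 1$. The formulas $p = (1+s)/2$ and $q = (1+t)/2$ show that any alternating word in $p, q$ of length $\ell$ is a linear combination of reduced words in $s, t$ of length at most $\ell$, and conversely. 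Thus $\Span{\A_m}$ coincides with the span of the reduced words in $s, t$ of length at most $m$, which are exactly the images of the group elements of word length $\leq m$ under the canonical embedding $\C[\Z_2 * \Z_2] \hookrightarrow C^*(\Z_2 * \Z_2) = A$. Injectivity of this embedding (true for every discrete group, but here following at once from amenability) shows these images are linearly independent in $A$; as both sets have cardinality $2m + 1$, it follows that $\A_m$ itself is linearly independent.

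For the containment, inspection of \Cref{definition:matrix-M} shows that each entry $M_{ij}$ is a sum of elementary tensors whose factors lie in $\{p, 1-p, q, 1-q\} \subseteq \Span{\A_1}$, so $M_{ij} \in \Span{\A^{\otimes 3}_1}$. A polynomial of degree at most $m$ in the $M_{ij}$ is a linear combination of products of at most $m$ such entries; distributing over the three tensor legs, each tensor factor becomes a word of length at most $m$ in $\{1, p, q\}$. Using $p^2 = p$ and $q^2 = q$, consecutive equal letters collapse and occurrences of $1$ disappear, so such a word reduces to an alternating word of length at most $m$, i.e.\ an element of $\A_m$. Hence every such product lies in $\Span{\A^{\otimes 3}_m}$, and $\psi(V_m) \subseteq \Span{\A^{\otimes 3}_m}$ follows.

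The main obstacle is the linear-independence claim, which ultimately rests on the nontrivial fact that $\C[\Z_2 * \Z_2]$ embeds faithfully into $C^*(\Z_2 * \Z_2)$ and that the algebraic tensor product sits faithfully inside the $C^*$-completion $A^{\otimes k}$. Both are consequences of the amenability of $\Z_2 * \Z_2$ (equivalently, nuclearity of $A$) and would be cited from the literature rather than proved here; everything else is routine bookkeeping with the defining relations of $M$.
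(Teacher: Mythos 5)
Your proof is correct and follows essentially the same route as the paper's: linear independence of the sets $\A_m^{\otimes k}$, closure of their spans under multiplication with additive degree (so that products of $m$ entries $M_{ij} \in \Span{\A_1^{\otimes 3}}$ land in $\Span{\A_m^{\otimes 3}}$), and the count $\abs{\A_m} = 2m+1$. The only difference is that the paper simply asserts the linear independence of $\A$ from the universality of $p$ and $q$, whereas you justify it in detail via the identification $A \cong C^*(\Z_2 * \Z_2)$, the faithfulness of $\C[\Z_2 * \Z_2] \to C^*(\Z_2 * \Z_2)$, and the embedding of the algebraic tensor product — a useful elaboration of a step the paper leaves implicit.
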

\begin{proof}
Since $p$ and $q$ are universal projections, the elements of $\A$ are linearly independent,
which also implies the linear independence of the sets $\A_m$ and $\A^{\otimes k}_m$. Further, 
if $a \in \A^{\otimes k}_m$ and $b \in \A^{\otimes k}_n$ then 
$ab \in \A^{\otimes k}_{m+n}$. Hence, if $P \in \npoly{X_4}$ is a polynomial of degree $m$,
then $P(M) \in \Span{A^{\otimes 3}_m}$, since the entries of the magic unitary $M$ are contained in 
$\Span{\A^{\otimes 3}_1}$. Thus, $\psi(V_m) \subseteq \Span{\A^{\otimes 3}_m}$. Further, 
we have $\abs{\A_m} = 2m+1$, which implies 
$\dim \psi(V_m) \leq \abs{A^{\otimes 3}_m} = {(2m+1)}^3$.
\end{proof}

\noindent Now, \Cref{algorithm:matrix-construction} can be used for constructing the transformation matrix $\Psi_m$ 
of the mapping $\psi|_{V_m}$ with respect to the basis $\B_m$ described
by the finite automaton $\Gamma$ from \Cref{appendix:automaton} and the basis $\A^{\otimes 3}_m$ 
of the previous paragraph. A proof of the correctness can be found 
in the next lemma.

\begin{algorithm}
\caption{Matrix construction}\label{algorithm:matrix-construction}
\begin{flushleft}
\textbf{Input:} degree $m$ \\
\textbf{Output:} matrix $\Psi_m$
\end{flushleft}
\smallskip
\begin{algorithmic}[1]
\State{initialize empty matrix $\Psi_m$}
\State{initialize queue $Q$ with $(s_0,\, 1 \otimes 1 \otimes 1,\, 0)$}
\While{$Q$ is not empty}
    \State{remove $(s,\, P,\, k)$ from $Q$}
    \State{insert column corresponding to $P$ into $\Psi_m$}
    \If{$k < m$}
        \ForAll{transition $s \to s'$ in $\Gamma$ with label $x_{ij}$}
            \State{$P' \gets P \cdot {(M)}_{ij}$}
            \State{insert $(s',\, P',\, k+1)$ into $Q$}
        \EndFor{}
    \EndIf{}
\EndWhile{}
\end{algorithmic}
\end{algorithm}

\begin{lemma}
In the previous notation, \Cref{algorithm:matrix-construction} constructs the transformation matrix $\Psi_m$
of the mapping $\psi|_{V_m}$ with respect to the bases $\B_m$ and $\A^{\otimes 3}_m$.
\end{lemma}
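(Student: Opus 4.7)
The plan is to establish a bijection between the triples dequeued by \Cref{algorithm:matrix-construction} and the basis $\B_m$ of $V_m$, and then to verify that each column inserted is the correct coordinate vector of the corresponding image under $\psi$. By \Cref{lemma:quotient-basis-automaton}, $\B_m$ is in bijection with the accepting paths in $\Gamma$ of length at most $m$; the path labeled $x_{i_1 j_1} \cdots x_{i_k j_k}$ corresponds to the residue class of that monomial. Since every state of $\Gamma$ is final (see \Cref{section:computatial-results}), this is the bijection with all paths of length at most $m$ that originate at $s_0$.

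The central step is a loop invariant: whenever a triple $(s, P, k)$ is dequeued, there exists a path in $\Gamma$ from $s_0$ to $s$ of length $k$ labeled by some monomial $w = x_{i_1 j_1} \cdots x_{i_k j_k}$, and $P = M_{i_1 j_1} \cdots M_{i_k j_k} = \psi(\pi(w))$. This is immediate for the initial triple $(s_0, 1\otimes 1\otimes 1, 0)$, corresponding to the empty path and the empty monomial. It is preserved when the algorithm follows a transition $s \to s'$ with label $x_{ij}$: the new polynomial $P \cdot M_{ij}$ corresponds to the extended path and extended monomial, and by the definition of $\varphi_M = \psi \circ \pi$ it still equals $\psi(\pi(w \cdot x_{ij}))$.

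Next, I would argue that every path of length at most $m$ starting at $s_0$ is enumerated exactly once. Termination follows because $\Gamma$ is finite and the length counter is bounded by $m$. Distinct paths correspond to distinct sequences of transitions, so each is enqueued precisely once when the traversal reaches the appropriate state. Consequently the columns inserted into $\Psi_m$ are in bijection with $\B_m$, and the order of insertion determines a fixed indexing. By \Cref{lemma:image-dimension}, each such $P$ lies in $\Span{\A^{\otimes 3}_m}$ and therefore admits a unique expansion in that basis, which is exactly the column stored.

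The only genuine obstacle is the implicit step \emph{insert column corresponding to $P$}: one must compute, for each $P$, its coordinates in the basis $\A^{\otimes 3}_m$. This reduces to the multiplication table of $\A$, governed by $p^2 = p$ and $q^2 = q$, applied componentwise in each of the three tensor legs; both operands and products remain within $\A^{\otimes 3}_m$ by \Cref{lemma:image-dimension} as long as $k \leq m$. With this bookkeeping fixed, the matrix assembled by the algorithm has columns equal to the coordinates of $\psi(\pi(w))$ for $w \in \B_m$, which is precisely the transformation matrix of $\psi|_{V_m}$ with respect to the chosen bases.
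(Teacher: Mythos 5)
Your proposal is correct and takes essentially the same approach as the paper: traverse the automaton, maintain a queue of triples $(s, P, k)$, and use the fact that every state is final so that paths of length at most $m$ from $s_0$ biject with $\B_m$. You phrase the argument more formally as a loop invariant and you explicitly flag the implicit coordinate-computation step (reducing via $p^2=p$, $q^2=q$ in each tensor leg), which the paper glosses over, but the underlying argument is the same.
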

\begin{proof}
In order to construct the transformation matrix $\Psi_m$, we have to evaluate
\[
    \psi(\pi(x)) = \varphi_{M}(x)
    = \prod_{\ell=1}^k \varphi_{M}(x_{i_\ell j_\ell}) = \prod_{\ell=1}^k M_{i_\ell j_\ell}
\]
for each monomial $x = x_{i_1 j_1} x_{i_1 j_1} \cdots x_{i_k j_k}$ in the basis $\B_m$.
Each product then corresponds to a column in $\Psi_m$ when represented with
respect to $\A^{\otimes 3}_m$. This is done by traversing all paths up to length $m$ in the 
finite automaton $\Gamma$ from \Cref{appendix:automaton}, in order to generate all basis element 
$x \in \B_m$. Since each state in $\Gamma$ is final, each path corresponds exactly to an element 
$x \in \B_m$. Further, we directly multiply the corresponding $M_{ij}$
when generating a path, such that we obtain the columns of $\Psi_m$
in the same step.

More specifically, \Cref{algorithm:matrix-construction} maintains a queue of paths and traverses the 
finite automaton in a breadth-first search style.
In the queue, each path is represented by a triple $(s, P, k)$, where
$s$ is the last node of the path, $P \in \Span{\A^{\otimes 3}}$ is the polynomial
evaluated along the path and $k$ is the length of the path.
The algorithm starts with the triple $(s_0, 1 \otimes 1 \otimes 1, 0)$ and
in each step removes the next path from the queue and inserts the corresponding column
into $\Psi_m$.
Then all outgoing transitions from $s$ to $s'$ with label $x_{ij}$
are considered and new paths $(s', P', k+1)$ are added to the queue.
These new paths extend the current path along the transition $s \to s'$ and contain
the corresponding polynomial $P' = P \cdot {(M)}_{ij}$. In this way, every path up to 
length $m$ will be generated and the corresponding column will be inserted.
\end{proof}

\subsection{Elimination Algorithm}

Our next goal is to compute the rank of  $\Psi_m$ to prove \Cref{theorem:main}.
However, to do this efficiently, we have to
store the matrix $\Psi_m$ as a pair $\Psi_m = (rows, columns)$, where
$\textvar{rows}$ and $\textvar{columns}$ are maps such that
\begin{enumerate}
    \item $\textvar{rows}(i) = \{ j_1, \hdots, j_{n_i} \}$ is the set of non-zero
    columns $j_1, \hdots, j_{n_i}$ in row $i$,
    \item $\textvar{columns}(j) = \{ i_1, \hdots, i_{m_j} \}$ is the set of non-zero
    rows $i_1, \hdots, i_{m_j}$ in column $j$.
\end{enumerate}
Using this matrix format, \Cref{algorithm:matrix-elimination} now computes 
a lower bound on the rank of $\Psi_m$. Its correctness is proven in the following lemma.

\begin{algorithm}
\caption{Matrix elimination}\label{algorithm:matrix-elimination}
\begin{flushleft}
\textbf{Input:} sparse $n_r \times n_c$ matrix $\Psi_m = (\textvar{rows},\, \textvar{columns})$ \\
\textbf{Output:} lower bound $\textvar{rank}$ for the rank of $\Psi_m$
\end{flushleft}
\smallskip
\begin{algorithmic}[1]
\State{$\textvar{rank} \gets 0$}
\State{initialize empty stack $S$}
\For{$i = 1, \hdots, n_r$}
    \If{$\abs{\textvar{rows}(i)} = 1$}
        \State{ push $i$ to $S$ }
    \EndIf{}
\EndFor{}
\While{$S$ is not empty}
    \State{remove $i$ from $S$}
    \If{$\abs{\textvar{rows}(i)} = 1$}
        \State{$\{ j \} \gets \textvar{rows}(i)$}
        \For{$k \in \textvar{columns}(j)$}
            \If{$k \neq i$}
                \State{delete $j$ from $\textvar{rows}(k)$}
                \If{$\abs{\textvar{rows}(k)} = 1$}
                    \State{ push $k$ to $S$ }
                \EndIf{}
            \EndIf{}
        \EndFor{}
        \State{$\textvar{columns}(j) \gets \{ i \}$}
        \State{$\textvar{rank} \gets \textvar{rank} + 1$}
    \EndIf{}
\EndWhile{}
\end{algorithmic}
\end{algorithm}

\begin{lemma}
\Cref{algorithm:matrix-elimination} computes a lower bound
on the rank of the matrix $\Psi_m$.
\end{lemma}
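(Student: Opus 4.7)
The plan is to interpret \Cref{algorithm:matrix-elimination} as performing an implicit sequence of rank-preserving row operations on $\Psi_m$, while the bookkeeping pair $(\textvar{rows}, \textvar{columns})$ tracks only the \emph{support} of the evolving matrix. The final value of $\textvar{rank}$ will then count a set of rows that is manifestly linearly independent in the transformed matrix, yielding the claimed lower bound on $\rank(\Psi_m)$.

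First, I would fix the invariant: at every checkpoint during execution there exists a matrix $\Psi'$, obtained from $\Psi_m$ by a sequence of row operations of the form $R_k \mapsto R_k - \lambda R_i$, such that for all $i, j$
\[
    j \in \textvar{rows}(i) \iff i \in \textvar{columns}(j) \iff \Psi'_{ij} \neq 0.
\]
Initially $\Psi' = \Psi_m$ and the invariant holds by the input specification. In each iteration of the main loop, a row $i$ is popped with $\textvar{rows}(i) = \{j\}$; by the invariant $\alpha := \Psi'_{ij} \neq 0$, and row $i$ of $\Psi'$ is supported only at column $j$. For each $k \in \textvar{columns}(j) \setminus \{i\}$, with $\beta_k := \Psi'_{kj} \neq 0$, I would apply the operation $R_k \mapsto R_k - (\beta_k/\alpha) R_i$ to $\Psi'$. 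Because $R_i$ touches only column $j$, this operation zeros out the entry $(k,j)$ and leaves every other column of $R_k$ unchanged, so the algorithm's updates --- deleting $j$ from $\textvar{rows}(k)$ and then $\textvar{columns}(j) \gets \{i\}$ --- re-synchronize the bookkeeping with the new support of $\Psi'$.

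To conclude, I would collect the counted rows $i_1, \ldots, i_r$ with their pivot columns $j_1, \ldots, j_r$. Two routine observations are needed: each row is pushed to $S$ at most once, because the main loop performs only deletions from row-supports and so no support can re-enter size $1$ once it has left that value; and once $\textvar{columns}(j_s) \gets \{i_s\}$ has been executed, no later pivot can have $j_s$ as its pivot column, since by invariant this would require some $i' \in \textvar{columns}(j_s) = \{i_s\}$ while $i_s$ is not pushed to $S$ again. Hence in the final $\Psi'$ the $r \times r$ submatrix on rows $\{i_1,\ldots,i_r\}$ and columns $\{j_1,\ldots,j_r\}$ is diagonal with non-zero diagonal, making $R_{i_1},\ldots,R_{i_r}$ linearly independent in $\Psi'$. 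Since row operations preserve rank, $\rank(\Psi_m) = \rank(\Psi') \geq r = \textvar{rank}$.

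The main obstacle is conceptual: the algorithm sees only supports, never the actual scalars, so the argument must confirm that every bookkeeping update faithfully reflects a genuine row operation on $\Psi'$. The enabling fact is the singleton condition $\abs{\textvar{rows}(i)} = 1$ at the pivot step, which guarantees the induced row operation acts only on the eliminated column, making support-only tracking exact. Linearly independent directions contributed only by rows that never become singletons cannot be certified by this analysis, which is why the output is in general only a lower bound rather than the exact rank.
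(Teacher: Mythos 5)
Your proof is correct and follows essentially the same route as the paper, which also views the algorithm as a support-tracked form of Gaussian elimination whose pivot rows are linearly independent. Your version is in fact more careful than the paper's brief sketch: you make explicit the invariant that the bookkeeping pair mirrors the support of a genuine row-reduced matrix, and you justify that pivot columns are pairwise distinct (so the counted rows really form an invertible diagonal block), a point the paper's phrase ``each row with one non-zero entry is linearly independent of all other rows'' leaves implicit.
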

\begin{proof}
\Cref{algorithm:matrix-elimination} performs a special form of Gaussian elimination and transforms $\Psi_m$
using elementary row operations. 
It searches rows $i$ which contain only one non-zero entry in some column $j$.
Next, all entries in column $j$ which occur in other rows $k \neq i$ are eliminated.
If such a row $k$ then contains only one non-zero remaining entry, it is pushed to a stack
to be considered next.
Each row with one non-zero entry is linearly independent of all other rows. 
Hence, the total number of such rows is a lower bound on the rank of the matrix $\Psi_m$.
Note that a row which was pushed to the stack could have been
eliminated before it gets processed. In this case, the row can be written
as a linear combination of other rows and does not contribute to the rank of the
matrix.
\end{proof}

\subsection{Proof of \Cref{theorem:main}}\label{section:main-proof}

\noindent Using \Cref{algorithm:matrix-elimination}, we can finally prove
\Cref{theorem:main}, which states that there exists no separating 
polynomial for the concrete magic unitary $M$ from \Cref{definition:matrix-M}
up to degree $50$.

\begin{proof}[Proof of \Cref{theorem:main}]
By running \Cref{algorithm:matrix-elimination}\footnote[1]{An implementation of our algorithms can be found at \href{https://github.com/nfaross/model-s4plus}{https://github.com/nfaross/model-s4plus}}, we obtain a lower
bound of $176851$ for the rank of $\Psi_{50}$, which also equals the 
number of columns (compare \Cref{section:computatial-results}).
Since the number of columns is the dimension of the image, the kernel of $\Psi_{50}$ is zero-dimensional.
Hence, there exist no separating polynomials for the concrete
magic unitary by the considerations in \Cref{section:computing-separating-polynomials}.
\end{proof}

\subsection{Complexity}
In the following, we analyse \Cref{algorithm:matrix-construction}
and \Cref{algorithm:matrix-elimination} and show that 
both have a complexity of $\OO(m^6)$.
Note that we fix the finite automaton $\Gamma$ and vary only the 
degree $m$. Further, we will assume that maps allow insertions and deletions 
in $\OO(1)$, which is approximately the case when implemented as hash maps. 

Before we come to the algorithms, we first have to consider the arithmetics in 
$\Span{A_m^{\otimes 3}}$. Note that an element $a \in \A_m$ is uniquely determined
by its length and parity. Hence, an element $a \in A_m^{\otimes 3}$ can be stored in constant
space and two elements $a, b \in A_m^{\otimes 3}$ can be multiplied in constant time.
For an element $a \in \Span{A_m^{\otimes 3}}$ denote with 
$\abs{a}$ the number of non-zero coefficients when represented with respect to 
the basis $A_m^{\otimes 3}$. Then $a$ can be stored using $\OO(\abs{a})$ space using a map which 
stores the corresponding coefficients for each basis element. Further, 
we can compute $ab$ for two elements $a, b \in \Span{A_m^{\otimes 3}}$
in time $\OO(\abs{a} \cdot \abs{b})$ by multiplying all basis elements pairwise.
With these considerations, we can now analyse our main algorithms.

\begin{lemma}\label{lemma:algorithm-complexity}
\Cref{algorithm:matrix-construction} and 
\Cref{algorithm:matrix-elimination}
have a complexity of $\OO(m^6)$.
\end{lemma}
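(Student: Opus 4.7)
The plan is to bound both algorithms by accounting for the coefficient-level work against the total number of nonzero entries of the matrix $\Psi_m$. From \Cref{section:computatial-results} the automaton basis $\B_m$ has $\binom{2m+3}{3} = \Theta(m^3)$ elements, and by \Cref{lemma:image-dimension} the target basis $\A_m^{\otimes 3}$ has $(2m+1)^3 = \Theta(m^3)$ elements. Consequently any element of $\Span{\A_m^{\otimes 3}}$ has at most $\OO(m^3)$ nonzero coordinates, and by the arithmetic recalled at the beginning of this subsection, products $ab$ in this span can be computed in time $\OO(\abs{a}\cdot\abs{b})$ when stored as sparse coefficient maps.

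First I would analyse \Cref{algorithm:matrix-construction}. The crucial input-dependent observation is that each entry $M_{ij}$ of the concrete magic unitary in \Cref{definition:matrix-M} is a sum of at most two pure tensors in $\A_1^{\otimes 3}$, hence $\abs{M_{ij}} = \OO(1)$. The while-loop iterates exactly $\abs{\B_m} = \OO(m^3)$ times, once per accepted path of length at most $m$ in $\Gamma$. Each iteration multiplies a polynomial $P$ with $\abs{P} = \OO(m^3)$ by the constant-size $M_{ij}$, which costs $\OO(m^3)$, and the remaining queue manipulation and column insertion into $\Psi_m$ are likewise $\OO(m^3)$ under the hash-map assumption. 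Summing gives $\OO(m^3)\cdot \OO(m^3) = \OO(m^6)$.

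Next I would bound \Cref{algorithm:matrix-elimination}. Since $\Psi_m$ has $\OO(m^3)$ rows and $\OO(m^3)$ columns, its total number $N$ of nonzero entries satisfies $N \leq \abs{\B_m}\cdot\abs{\A_m^{\otimes 3}} = \OO(m^6)$, and the initialization scan costs $\OO(m^3)$. For the main loop the key observation, which is the main obstacle, is an amortised accounting: the row sizes $\abs{\textvar{rows}(\cdot)}$ decrease monotonically throughout the execution, so each row is pushed onto $S$ at most twice (once at initialization and once when its size later drops to $1$); and once a column $j$ is processed as pivot column, the assignment $\textvar{columns}(j)\gets\{i\}$ together with the deletion of $j$ from every other $\textvar{rows}(k)$ in the inner for-loop guarantees that $j$ never serves as a pivot column again. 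Once this invariant is in place, each original nonzero entry is touched a bounded number of times, every stack and hash-map operation is $\OO(1)$, and the total running time is proportional to $N$, yielding $\OO(m^6)$.
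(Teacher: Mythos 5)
Your proposal is correct and follows essentially the same route as the paper: bound the work of both algorithms by the number $N$ of nonzero entries of $\Psi_m$, and bound $N$ by (number of rows) $\times$ (number of columns) $= (2m+1)^3\cdot\binom{2m+3}{3}=\OO(m^6)$. Your per-iteration bookkeeping for \Cref{algorithm:matrix-construction} and the explicit amortisation details for \Cref{algorithm:matrix-elimination} (bounded number of pushes per row, no pivot column reused) are only slightly more detailed versions of the paper's argument, not a different method.
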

\begin{proof}
First, we consider \Cref{algorithm:matrix-construction}. 
Since a queue allows all operations in constant time, its running time
is determined by the total time for multiplying polynomials and inserting rows into $\Psi_m$.
Since the entries of $M_3$ are constant, we can compute $P' = P \cdot {(M_3)}_{ij}$
in $\OO(\abs{P})$.
Further, a polynomial
$P$ can be inserted into $\Psi_m$ in $\OO(\abs{P})$ using the matrix format
described before. Hence, \Cref{algorithm:matrix-construction}
requires $\OO(N)$ time and space, where
\[
    N = \sum_{\text{$P$ computed}} \abs{P} = \sum_{x \in \B_m} \abs{\varphi_{M}(x)}
\]
is the number of non-zero entries in the matrix $\Psi_m$. 
Now, consider \Cref{algorithm:matrix-elimination}. Since
stacks allow all operations in $\OO(1)$ and each of the $\OO(N)$ non-zero entry of $\Psi_m$
is deleted at most once in constant time, \Cref{algorithm:matrix-elimination}
requires time $\OO(N)$. Hence, it remains to bound the number $N$ of non-zero entries 
of $\Psi_m$.
By \Cref{lemma:image-dimension}, we know that $\Psi_m$ has at most
$\abs{\A_m^{\otimes 3}} = (2m + 1)^3$ rows. On the other hand, the number
of columns is given $\abs{\B_m}$, which was computed in \Cref{section:computatial-results}.
Hence, $\Psi_m$ has at most 
\[
    N \leq {(2m + 1)}^3 \cdot \binom{2m+3}{3} = \OO(m^6)
\] 
non-zero entries.
\end{proof}

\section{Concluding remarks}\label{section:concluding-remarks}

\noindent In the following, we put our main result in the context of $C^*$-algebraic models 
of the quantum permutation group and discuss why the concrete magic unitary $M$ from \Cref{definition:matrix-M} might 
define a faithful model of $S_4^+$. Throughout this section, denote 
with $A$ again
the universal unital $C^*$-algebra generated by two projections and with 
$B$ the $C^*$-algebra generated by the entries of the concrete magic unitary $M$. 

\subsection{Hints for \texorpdfstring{$M$}{M} being a faithful model}

We start with an immediate consequence of 
\Cref{theorem:main}.

\begin{corollary}\label{corollary:main}
Let $u$ be the matrix containing the generators of $C(S_4^+)$ and let $P \in \npoly{X_4}$ be a polynomial with $\deg P \leq 50$.
Then $P(M) = 0$ if and only if $P(u) = 0$.
\end{corollary}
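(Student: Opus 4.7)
The plan is to derive the corollary as a direct consequence of Theorem~\ref{theorem:main}, splitting it into the two implications and handling each separately. Both directions rest on the interplay between the universal property of $C(S_4^+)$ and the fact that the main theorem characterizes the kernel of $\varphi_M$ in low degree.

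For the direction $P(u) = 0 \Rightarrow P(M) = 0$, I would invoke the universal property of $C(S_4^+)$: since $M$ is itself a magic unitary (by construction as $R^{\smalloperp 3}$ together with the closure under $\smalloperp$), there exists a surjective $*$-homomorphism $\varphi \colon C(S_4^+) \to B$ with $\varphi(u_{ij}) = M_{ij}$. Applying $\varphi$ to the identity $P(u) = 0$ then yields $P(M) = \varphi(P(u)) = 0$ immediately, with no degree hypothesis needed.

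For the converse $P(M) = 0 \Rightarrow P(u) = 0$, which is the content that actually uses the main theorem, I would argue as follows. Assume $\deg P \leq 50$ and $P(M) = 0$. By Theorem~\ref{theorem:main}, $P$ is not separating, so $P \in I_4$. But $I_4$ is generated by the polynomials encoding the magic unitary relations of Definition~\ref{definition:magic-unitary} and Proposition~\ref{proposition:magic-unitary-extra-relations}, and the generators $u_{ij}$ of $C(S_4^+)$ satisfy exactly these relations by definition (together with the consequence derived in Proposition~\ref{proposition:magic-unitary-extra-relations}, applied in the universal $C^*$-algebra $C(S_4^+)$). Therefore every generator of $I_4$ vanishes on $u$, and since ideals are closed under multiplication and addition, $P(u) = 0$.

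There is really no main obstacle here: the corollary is essentially a reformulation of Theorem~\ref{theorem:main} in terms of the generators of $C(S_4^+)$, and the only ingredients used are the universal property of $C(S_4^+)$ and the fact that the magic unitary $u$ satisfies the same algebraic relations that define $I_4$. The proof should therefore be no more than a few lines, consisting of a straightforward invocation of Theorem~\ref{theorem:main} together with a mention of the surjection $\varphi$.
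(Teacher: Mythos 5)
Your proposal is correct and follows essentially the same route as the paper: the forward direction via the universal $*$-homomorphism $C(S_4^+) \to B$, $u_{ij} \mapsto M_{ij}$, and the converse by combining Theorem~\ref{theorem:main} with the fact that the generators of $I_4$ vanish on $u$ by the definition of $C(S_4^+)$ together with Proposition~\ref{proposition:magic-unitary-extra-relations}. No gaps.
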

\begin{proof}
Let $P \in \npoly{X_4}$ be a polynomial with $\deg P \leq 50$.
Since $C(S_4^+)$ is the universal $C^*$-algebra generated by the entries of a $4 \times 4$
magic unitary, there exists a $*$-homomorphism $\Phi \colon C(S_4^+) \to B, u_{ij} \mapsto M_{ij}$.
Thus, if $P(u) = 0$, then 
\[
    P(M) = P(\Phi(u)) = \Phi(P(u)) = \Phi(0) = 0.
\]
On the other hand, if $P(M) = 0$, then $P \in I_4$ by \Cref{theorem:main}.
However, by the definition of $C(S_4^+)$ and \Cref{proposition:magic-unitary-extra-relations}, we have that 
$P(u) = 0$ for every $P \in I_4$.
\end{proof}

\noindent Since $50$ is a quite large bound for the degree of a non-commutative
polynomial, we conjecture that the previous corollary holds for polynomials
of an arbitrary degree. In particular, the bound $50$ arises from limited computational power
and there is no immediate reason why the setting of \Cref{theorem:main} should change when $m > 50$.
Also recall, that for the initial \Cref{question:open-question-specific}, we do have a polynomial $p_1$
of degree $1$ and a polynomial $p_2$ of degree $2$. Hence, it seems unreasonable that a polynomial $p_3$ 
would require a degree larger than $50$.
Further, the previous corollary also indicates the stronger statement that the concrete magic unitary $M$ 
from \Cref{definition:matrix-M} defines a faithful model
of $S_n^+$ in the sense that $C(S_n^+) \cong B$ via $u_{ij} \mapsto M_{ij}$.
In this case, it would be possible to obtain $C(S_n)$ as a quotient
of $B$, since $C(S_4)$ is the abelianization of $C(S_4^+)$.
The following proposition shows, that this necessary condition indeed holds.

\begin{proposition}\label{proposition:spectrum}
In the previous notation, $\Spec B \cong S_4$. In particular, $C(S_4)$ is a quotient of $B$.
\end{proposition}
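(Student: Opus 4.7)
The plan is to identify $\Spec B$ with $S_4$ via the magic-unitary structure. Any character $\chi \colon B \to \C$ sends $M$ to a magic unitary with entries in $\C$, which is forced to be a permutation matrix $P_{\sigma_\chi}$; this yields a map $\Phi \colon \Spec B \to S_4$, $\chi \mapsto \sigma_\chi$. Since a character of $B$ is determined by its values on the generators $M_{ij}$, the map $\Phi$ is injective, giving $\abs{\Spec B} \leq 24$. The main content is therefore to show that $\Phi$ is surjective, at which point $\Spec B \cong S_4$ as (finite discrete) spaces, and Gelfand duality on the commutative quotient of $B$ exhibits $C(S_4)$ as a quotient of $B$.

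To produce enough characters of $B$, I would exploit the inclusion $B \subseteq A^{\otimes 3}$ and restrict characters of $A^{\otimes 3}$ to $B$. Since $A = C^*(\Z_2 * \Z_2)$ is generated by two projections, its character space is in bijection with $\{0,1\}^2$, and evaluating $R$ at the four characters of $A$ yields the set of permutations
\[
    H = \{(2\,3),\; (1\,3\,2),\; (2\,3\,4),\; (1\,3\,4\,2)\} \subseteq S_4.
\]
A direct calculation from $(M_\sigma \operp M_\tau)_{ij} = \sum_k \delta_{k,\sigma(i)} \delta_{j,\tau(k)}$ shows $M_\sigma \operp M_\tau = M_{\tau \circ \sigma}$ for permutation matrices, so evaluating $M = R^{\smalloperp 3}$ at a tensor product of three characters of $A$ whose associated permutations are $\pi_1, \pi_2, \pi_3 \in H$ produces the permutation matrix of $\pi_3 \circ \pi_2 \circ \pi_1$. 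Hence the image of $\Phi$ contains the product set $H \cdot H \cdot H \subseteq S_4$.

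The remaining step, and the main obstacle, is the purely combinatorial verification that $H \cdot H \cdot H = S_4$. Direct enumeration of the sixteen products of $H \cdot H$ shows they are pairwise distinct and disjoint from $H$, so that $\abs{H \cup H \cdot H} = 20$, and the four permutations missing from $H \cup H \cdot H$ turn out to be $(1\,4)$, $(1\,2\,4)$, $(1\,4\,3)$, and $(1\,2\,4\,3)$. Each of these is then exhibited explicitly as a triple product in $H \cdot H \cdot H$ (for instance, $(1\,4) = (2\,3)(1\,3\,4\,2)(1\,3\,4\,2)$), which makes $\Phi$ surjective and hence bijective. Finally, the characters $\chi_\sigma$, $\sigma \in S_4$, assemble into a unital $*$-homomorphism $B \to \C^{S_4} = C(S_4)$ whose image is a unital $*$-subalgebra separating the points of $S_4$ and hence equals all of $C(S_4)$, exhibiting $C(S_4)$ as a quotient of $B$.
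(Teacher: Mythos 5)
Your proposal is correct, and it follows the same core strategy as the paper: a character of $B$ is determined by the permutation matrix $\chi(M)$, and surjectivity is established by restricting product characters of $A^{\otimes 3}$ (equivalently, choosing $p_i, q_i \in \{0,1\}$ in each tensor leg) to $B$. Where the paper simply records in an appendix an explicit table of six-tuples $(p_1,q_1,\dots,p_3,q_3)$ realizing each of the $24$ permutation matrices, you replace that raw enumeration with a structural observation: the $\operp$-product of permutation matrices corresponds to composition of the underlying permutations, so the problem reduces to checking that the product set $H\cdot H\cdot H$ equals $S_4$, where $H = \{(2\,3), (1\,3\,2), (2\,3\,4), (1\,3\,4\,2)\}$ is the image of the four characters of $A$ applied to $R$. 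The calculation that $\abs{H\cdot H}=16$ and the four residual elements $(1\,4),(1\,2\,4),(1\,4\,3),(1\,2\,4\,3)$ lie in $H\cdot H\cdot H$ checks out. Your version is a cleaner and more conceptual rendition of the same verification: it isolates why three tensor legs suffice (two would give only $20$ permutations), whereas the paper's table, while complete, gives no such insight. Both approaches finish the same way, via Gelfand--Naimark.
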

\begin{proof}
Let $\varphi \in \Spec B$ be a character. Then $\varphi$ is uniquely determined by the magic unitary $\varphi(M) \in M_4(\C)$, which is a
permutation matrix. Hence, it remains to show that we obtain every permutation matrix via a character. 
Let $p_1, q_1, \ldots, p_3, q_3 \in \{0, 1\}$. Since $A$ is the universal $C^*$-algebra generated by two projections, there exists a $*$-homomorphism $\varphi \colon A^{\otimes 3} \to \C$ which
maps $p$ in the $i$-th tensor leg to $p_i$ and $q$ in the $i$-th tensor leg to $q_i$.
Restricting $\varphi$ to $B \subseteq A^{\otimes 3}$, we obtain a character $\varphi|_B \in \Spec B$.
\Cref{appendix:characters} then shows how to obtain every permutation matrix via a suitable 
choices of $p_1, q_1, \ldots, p_3, q_3$.
Thus, $\Spec B \cong S_4$ and by the Gelfand-Naimark Theorem we obtain a
surjective $*$-homomorphism $\Phi \colon B \to C(S_4)$. Hence, $C(S_4) \cong B/\ker \Phi$, 
which proves the second part of the statement.
\end{proof}

\noindent Note that \Cref{appendix:characters} shows that it is possible to construct 
each permutation matrix in the proof of \Cref{proposition:spectrum} by sending 
$p_0$ to zero. Hence, it is possible to obtain a slightly simpler model of $S_4^+$ with full spectrum 
by setting $p = 0$ in the first tensor leg in the entries of the concrete magic unitary $M$ from 
\Cref{definition:matrix-M}.

\subsection{Further results on \texorpdfstring{$S_n^+$}{Sn+}}

Firstly, recall that a $*$-algebra $A$ is said to be \textit{residually finite dimensional} if there
exists an injective $*$-homomorphism
\[
    \pi \colon A \to \prod_{i \in I} M_{n_i}(\C)
\] 
into a product of matrix algebras, where $I$ is an arbitrary (possibly infinite) index set.
In~\cite{brannan}, Brannan, Chirvasitu and Freslon showed
that the $*$-algebra $A$
corresponding to $S_n^+$ is residually finite dimensional. Hence, for each
$*$-polynomial $P \neq 0$ in the generators of $C(S_n^+)$ there
exists a $*$-homomorphism $\pi_i \colon A \to M_{n_i}(\C)$ such
that $\pi_{i}(P) \neq 0$, where $\pi_{i}$ is obtained by projecting onto the $i$-th
for some $i \in I$ depending on $p$. This gives to some extent some information
on possible models of $S_4^+$.

Secondly, one possible approach for showing that the concrete magic unitary $M$
from \Cref{definition:matrix-M} defines a faithful model of $S_4^+$ would be to show that 
$M$ is a corepresentation matrix of some compact quantum group $G$
with $B \subseteq C(G)$. In this case, the 
comultiplication of $C(G)$ restricts to a comultiplication on $B$, which 
turns $B$ into a compact matrix quantum group $H$ with $S_4 \subseteq H \subseteq S_4^+$.
Since the inclusion $S_4 \subseteq S_4^+$ is maximal (see~\cite{banica09}) 
and $B$ is non-commutative, it would follow that $H = S_4^+$.

Note that the $C^*$-algebra $A$ corresponds to the compact
quantum group $\widehat{\Z_2 \ast \Z_2}$, with comultiplication 
$\Delta \colon A \to A \otimes A$ given by 
\begin{align*}
    \Delta(p) &= p \otimes p + (1-p) \otimes (1-p), \\
    \Delta(q) &= q \otimes q + (1-q) \otimes (1-q).
\end{align*}
Hence, the tensor product $A^{\otimes 3}$ already carries a
direct product quantum group structure. However, 
it seems that one would have to find a different product structure in order to 
turn $M$ into a corepresentation matrix, which the authors were not able to achieve.

Thirdly, there are other models of $S_n^+$, which are studied for example in \cite{banica07}, \cite{banica09b}, \cite{banica15} and \cite{banica17}.
However, these are of different types and are constructed from Pauli matrices, Fourier matrices or some related constructions.
In the case of~\cite{banica07}, the corresponding model for $n = 4$ turns out to be faithful (see \cite{banica08}).

\subsection{No generalization to \texorpdfstring{$S_n^+$}{Sn+} with \texorpdfstring{$n > 4$}{n > 4}}\label{section:generalisation}

\noindent Finally, we consider similar models of the quantum permutation
group $S_n^+$ with $n > 4$ and show that a generalisation of our main result no
longer holds in this setting. In the following, denote with $A$ again 
the universal unital $C^*$-algebra generated by two projections $p$ and $q$.
Further, recall from \Cref{section:matrix-elimination} the definition of 
the sets $\A_m^{\otimes k} \subseteq A^{\otimes k}$ consisting of tensor 
products of alternating products of 
$p$ and $q$ up to length $m$. 
In this notation, we obtain the following result.

\begin{proposition}
Let $n \in \{5, 6\}$ and $k, \ell \in \N$. If $M \in M_n(A^{\otimes k})$
is a magic unitary with entries $M_{ij} \in \Span{\A_\ell^{\otimes k}}$,
then there exists a separating polynomial for $M$.
\end{proposition}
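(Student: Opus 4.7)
The plan is a dimension-counting argument: the domain $V_m \subseteq \npoly{X_n}/I_n$ has exponentially growing dimension for $n \in \{5, 6\}$ (by the computations reported in \Cref{section:computatial-results}), while under the hypothesis on $M$ the image of the induced map $\psi \colon \npoly{X_n}/I_n \to A^{\otimes k}$ restricted to $V_m$ lies in a subspace whose dimension grows only polynomially in $m$. For sufficiently large $m$ this restriction cannot be injective, and any non-zero element of its kernel lifts to a separating polynomial.

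The key technical input is a multiplicative closure property of the alternating-word sets: if $a \in \A_{s_1}$ and $b \in \A_{s_2}$, then $ab \in \A_{s_1+s_2}$. Indeed, since $a$ and $b$ are alternating words in $p$ and $q$, the concatenation $ab$ is alternating except possibly at the single boundary letter, where the idempotence relations $p^2 = p$ and $q^2 = q$ collapse two equal adjacent letters into one; the resulting word is again alternating of length at most $s_1 + s_2$. Taking tensor products componentwise gives $\A_{s_1}^{\otimes k} \cdot \A_{s_2}^{\otimes k} \subseteq \A_{s_1+s_2}^{\otimes k}$, and by induction on the degree the hypothesis $M_{ij} \in \Span{\A_\ell^{\otimes k}}$ implies $\varphi_M(P) \in \Span{\A_{m\ell}^{\otimes k}}$ for every $P \in \npoly{X_n}$ with $\deg P \leq m$. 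Since the elements of $\A_{m\ell}^{\otimes k}$ are linearly independent (the argument of \Cref{lemma:image-dimension} applies unchanged), one obtains the bound
$$\dim \psi(V_m) \leq \abs{\A_{m\ell}^{\otimes k}} = (2m\ell+1)^k,$$
which is polynomial in $m$ of degree $k$, with $k$ and $\ell$ fixed.

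On the other hand, \Cref{section:computatial-results} reports $\dim V_m = \Theta(c^m)$ with $c \approx 6.854$ for $n = 5$ and $c \approx 13.928$ for $n = 6$, so the domain grows exponentially. Since any exponential with base greater than $1$ eventually dominates any fixed polynomial in $m$, there exists some degree $m_0$ with $\dim V_{m_0} > (2m_0\ell+1)^k$, forcing $\ker \psi|_{V_{m_0}} \neq \{0\}$; any non-zero representative $P \in \npoly{X_n}$ of a kernel element then satisfies $\varphi_M(P) = 0$ and $P \notin I_n$, and is therefore separating. The only step that requires any genuine bookkeeping is the boundary-collapse verification for alternating words, and it reduces cleanly to the idempotence of $p$ and $q$; no structural information about $M$ beyond its presentation is needed. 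It is worth noting that the argument critically exploits the fact that $k$ and $\ell$ are fixed while $m$ is free, and that it fails for $n \leq 4$ precisely because $\dim V_m$ grows only polynomially there (indeed $\Theta(m^3)$ for $n = 4$), matching the polynomial growth of the ambient target.
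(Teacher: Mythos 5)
Your argument is exactly the one the paper gives: factor $\varphi_M$ through $\npoly{X_n}/I_n$, note that $\dim V_m$ grows exponentially for $n\in\{5,6\}$ by the reported Gr\"obner-basis computations, bound $\dim\psi(V_m)$ polynomially via the multiplicative closure $\A_{s_1}^{\otimes k}\cdot\A_{s_2}^{\otimes k}\subseteq\A_{s_1+s_2}^{\otimes k}$, and conclude that the kernel is nontrivial for large $m$. Your explicit bound $(2m\ell+1)^k$ is in fact slightly sharper than the paper's stated $(2m+1)^{k\ell}$, but both are polynomial in $m$ of fixed degree, so the two proofs are essentially identical.
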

\begin{proof}
As in the proof of our main theorem, we factor the substitution
homomorphism $\varphi_M$ as 
\begin{center}
\begin{tikzcd}
    \npoly{X_n}  \arrow[rd, "\pi"] \arrow[rr, "\varphi_M"] & &
    A^{\otimes{k}}  \\ &
    \npoly{X_n} / I_n \arrow[ru, "\psi"] &
\end{tikzcd}
\end{center}
and consider the restriction $\psi|_{V_m}$ to the spaces 
$V_m \subseteq \npoly{X_n} / I_n$ of residue classes of polynomials up to degree $m$.
By the computational results in \Cref{section:computatial-results}, we have that for 
$n \in \{5, 6\}$ the 
dimension of $V_m$ grows exponential in $m$. However, one shows similar
to \Cref{lemma:image-dimension} that 
$\dim \psi(V_m) \leq {(2m+1)}^{kl}$.
Hence, $\dim V_m > \dim \psi(V_m)$ for large $m$,
which implies $\dim \ker \psi > 0$.
Thus, we find a residue class of a non-trivial separating polynomial in 
$\ker \psi$.
\end{proof}

\noindent Note that we were only able to compute Gröbner bases for 
the magic unitary ideals $I_n$ with $n \leq 6$, which allowed us to 
prove the exponential growth of $\dim V_m$ in these cases. However, 
$\dim V_m$ should grow even faster for larger $n$, such that we expect 
the previous proposition to hold for all $n > 4$.
Further, the previous dimension argument shows that
the quantum permutation group $S_4^+$ is less complex than
the larger quantum permutation groups $S_n^+$ with $n > 4$.
Compare this to~\cite{banica98}, where it is shown that the 
dual of $S_4^+$ is amenable but it is not amenable for $S_n^+$ with $n > 4$, 
which also shows that $S_4^+$ is somewhat simpler.

\bibliographystyle{alpha}
\bibliography{concrete_model_s4+}
\newpage
\appendix
\section{Finite automaton}\label{appendix:automaton}

\begin{figure}[H]
\newcommand{\opac}{0.35}
\makebox[\textwidth][c]{
\begin{tikzpicture}[scale=0.92, auto, initial text=, >=latex]
\node[state, accepting, initial] (v0) at (0, 1.5) {$0$};
\node[state, accepting] (v5) at (5, -1.5) {$5$};
\node[state, accepting] (v1) at (2, -1.5) {$1$};
\node[state, accepting] (v2) at (-2, -1.5) {$2$};
\node[state, accepting] (v4) at (-5, -1.5) {$4$};
\node[state, accepting] (v3) at (6.5, -5) {$3$};
\node[state, accepting] (v10) at (5.5, -7) {$10$};
\node[state, accepting] (v8) at (-6.5, -5) {$8$};
\node[state, accepting] (v15) at (-5.5, -7) {$15$};
\node[state, accepting] (v6) at (1, -5) {$6$};
\node[state, accepting] (v12) at (4, -5) {$12$};
\node[state, accepting] (v7) at (-1, -5) {$7$};
\node[state, accepting] (v13) at (-4, -5) {$13$};
\node[state, accepting] (v11) at (1, -7.5) {$11$};
\node[state, accepting] (v14) at (-1, -7.5) {$14$};
\node[state, accepting] (v9) at (0, -10) {$9$};
\node[state, accepting] (v16) at (0, -12) {$16$};
\path[->, sloped] (v0) edge node {\footnotesize $x_{11}$} (v1);
\path[->, sloped] (v0) edge node {\footnotesize $x_{12}$} (v2);
\path[->, sloped, opacity=\opac] (v0) edge[bend left=10] node {\footnotesize $x_{13}$} (v3);
\path[->, sloped] (v0) edge[bend right=15] node {\footnotesize $x_{21}$} (v4);
\path[->, sloped] (v0) edge[bend left=15] node {\footnotesize $x_{22}$} (v5);
\path[->, sloped] (v0) edge node {\footnotesize $x_{23}$} (v6);
\path[->, sloped] (v0) edge node {\footnotesize $x_{31}$} (v7);
\path[->, sloped, opacity=\opac] (v0) edge[bend right=10] node {\footnotesize $x_{32}$} (v8);
\path[->, sloped, opacity=\opac] (v0) edge[below] node {\footnotesize $x_{33}$} (v9);
\path[->, sloped] (v1) edge[bend left] node {\footnotesize $x_{22}$} (v5);
\path[->, sloped] (v1) edge node {\footnotesize $x_{23}$} (v6);
\path[->, sloped, opacity=\opac] (v1) edge node {\footnotesize $x_{32}$} (v8);
\path[->, sloped, opacity=\opac] (v1) edge[bend left] node {\footnotesize $x_{33}$} (v9);
\path[->, sloped] (v2) edge[bend left, below] node {\footnotesize $x_{21}$} (v4);
\path[->, sloped] (v2) edge node {\footnotesize $x_{23}$} (v6);
\path[->, sloped] (v2) edge node {\footnotesize $x_{31}$} (v7);
\path[->, sloped, opacity=\opac] (v2) edge[bend right] node {\footnotesize $x_{33}$} (v9);
\path[->, sloped] (v3) edge[bend left, below] node {\footnotesize $x_{21}$} (v10);
\path[->, sloped] (v3) edge node {\footnotesize $x_{31}$} (v11);
\path[->, sloped] (v4) edge[bend left] node {\footnotesize $x_{12}$} (v2);
\path[->, sloped, opacity=\opac] (v4) edge node {\footnotesize $x_{13}$} (v3);
\path[->, sloped] (v4) edge node {\footnotesize $x_{32}$} (v8);
\path[->, sloped] (v4) edge[bend right=80, looseness=1.6] node {\footnotesize $x_{33}$} (v9);
\path[->, sloped] (v5) edge[bend left] node {\footnotesize $x_{11}$} (v1);
\path[->, sloped] (v5) edge node {\footnotesize $x_{13}$} (v3);
\path[->, sloped] (v5) edge node {\footnotesize $x_{31}$} (v7);
\path[->, sloped] (v5) edge[bend left=80, looseness=1.6] node {$x_{33}$} (v9);
\path[->, sloped] (v6) edge node {\footnotesize $x_{31}$} (v11);
\path[->, sloped] (v6) edge[bend left] node {\footnotesize $x_{11}$} (v12);
\path[->, sloped] (v7) edge[bend left] node {\footnotesize $x_{12}$} (v13);
\path[->, sloped] (v7) edge node {\footnotesize $x_{13}$} (v14);
\path[->, sloped] (v8) edge[below] node {\footnotesize $x_{13}$} (v14);
\path[->, sloped] (v8) edge[bend left, below] node {\footnotesize $x_{11}$} (v15);
\path[->, sloped] (v9) edge[bend left, below] node {\footnotesize $x_{11}$} (v16);
\path[->, sloped] (v10) edge[bend left, below] node {\footnotesize $x_{13}$} (v3);
\path[->, sloped] (v10) edge[bend left=20] node {\footnotesize $x_{33}$} (v9);
\path[->, sloped] (v11) edge[bend left, below] node {\footnotesize $x_{13}$} (v14);
\path[->, sloped] (v12) edge[bend left] node {\footnotesize $x_{23}$} (v6);
\path[->, sloped, opacity=\opac] (v12) edge[bend left=20] node {\footnotesize $x_{33}$} (v9);
\path[->, sloped] (v13) edge[bend left] node {\footnotesize $x_{31}$} (v7);
\path[->, sloped, opacity=\opac] (v13) edge[bend right=20] node {\footnotesize $x_{33}$} (v9);
\path[->, sloped] (v14) edge[bend left] node {\footnotesize $x_{31}$} (v11);
\path[->, sloped] (v15) edge[bend left, below] node {\footnotesize $x_{32}$} (v8);
\path[->, sloped] (v15) edge[bend right=20] node {\footnotesize $x_{33}$} (v9);
\path[->, sloped] (v16) [bend left, below] edge node {\footnotesize $x_{33}$} (v9);
\end{tikzpicture}
}
\caption{The finite automaton constructed in \Cref{section:quotient-basis} for the magic unitary ideal $I_4$.
It has $17$ states, where state $0$ is the initial state and every state is final. The grey edges are only coloured differently
for better visualisation.}\label{figure:automaton-I4}
\end{figure}
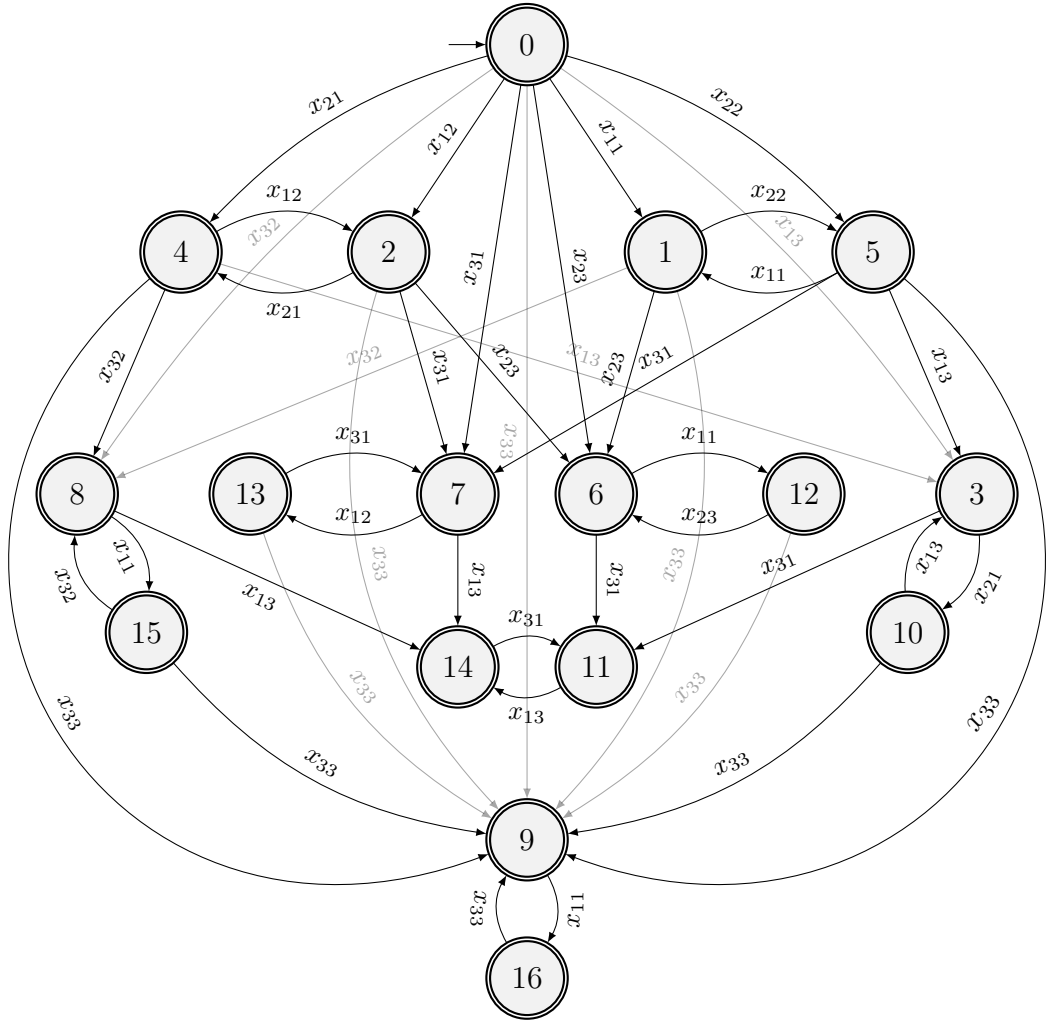

\newpage
\section{Characters from Proposition~\ref{proposition:spectrum}}\label{appendix:characters}

\begin{figure}[H]
\begin{tabular}{c|cccccc}
$S_4$ & $p_1$ & $q_1$ & $p_2$ & $q_2$ & $p_3$ & $q_3$ \\
\hline
() & 0 & 1 & 0 & 1 & 0 & 1 \\
(13)(24) & 0 & 1 & 0 & 1 & 1 & 0 \\
(14)(23) & 0 & 1 & 1 & 0 & 0 & 1 \\
(12)(34) & 0 & 1 & 1 & 0 & 1 & 0 \\
(234) & 0 & 0 & 1 & 1 & 0 & 1 \\
(132) & 0 & 0 & 1 & 1 & 1 & 0 \\
(143) & 0 & 0 & 0 & 0 & 0 & 1 \\
(124) & 0 & 1 & 0 & 0 & 0 & 0 \\
(243) & 0 & 0 & 0 & 1 & 0 & 0 \\
(134) & 0 & 0 & 0 & 1 & 1 & 1 \\
(142) & 0 & 0 & 1 & 0 & 1 & 1 \\
(123) & 0 & 0 & 1 & 0 & 0 & 0 \\
(34) & 0 & 0 & 0 & 1 & 0 & 1 \\
(1324) & 0 & 0 & 0 & 1 & 1 & 0 \\
(1423) & 0 & 0 & 1 & 0 & 0 & 1 \\
(12) & 0 & 0 & 1 & 0 & 1 & 0 \\
(23) & 0 & 0 & 1 & 1 & 0 & 0 \\
(1342) & 0 & 0 & 1 & 1 & 1 & 1 \\
(14) & 0 & 1 & 0 & 0 & 0 & 1 \\
(1243) & 0 & 0 & 0 & 0 & 0 & 0 \\
(24) & 0 & 1 & 0 & 1 & 0 & 0 \\
(13) & 0 & 1 & 0 & 1 & 1 & 1 \\
(1432) & 0 & 1 & 1 & 0 & 1 & 1 \\
(1234) & 0 & 1 & 1 & 0 & 0 & 0
\end{tabular}
\end{figure}

\end{document}